\DeclareMathOperator*{\support}{supp}
\DeclareMathOperator*{\psh}{PSH}
\DeclareMathOperator*{\vol}{Vol}
\DeclareMathOperator{\ddc}{dd^c}
\DeclareMathOperator*{\di}{d}
\DeclareMathOperator*{\dic}{d^c}
\DeclareMathOperator{\poly}{Poly}
\DeclareMathOperator{\spann}{span}
\DeclareMathOperator{\extr}{Extr}
\DeclareMathOperator{\Card}{card}
\renewcommand{\wp}{\mathscr{P}}
\newcommand{\ddcn}[1]{{(\ddc #1)}^{n}}
\newcommand{\areg}{A_{\text{reg}}}
\newcommand{\C}{\mathbb{C}}
\newcommand{\R}{\mathbb{R}}
\newcommand{\N}{\mathbb{N}}
\newcommand{\z}{\zeta}
\newtheorem{theorem}{Theorem}[section]
\newtheorem{proposition}{Proposition}[section]
\newtheorem{lemma}{Lemma}[section]
\theoremstyle{definition}
\newtheorem{definition}[theorem]{Definition}
\theoremstyle{remark}
\newtheorem{remark}[theorem]{Remark}
\title{The Extremal Plurisubharmonic Function of the Torus}
\author{Federico Piazzon}
\begin{document}
\maketitle
\begin{center}
\emph{In honor of the 60th birthday of my PhD advisor and friend,\\ Norm Levenberg.}
\end{center}

\begin{abstract}
We compute the extremal plurisubharmonic function of the real torus viewed as a compact subset of its natural algebraic complexification.
\end{abstract}

\section{Introduction}
Pluripotential Theory is the study of the complex Monge Ampere operator $\ddcn{}$ and plurisubharmonic functions. It is a non linear potential theory on multi-dimensional complex spaces that can be understood as the natural generalization of classical Potential Theory. Indeed, when the dimension $n$ of the ambient space is $1,$ the two theories coincide. We refer to the classical monograph \cite{Kl91} and to \cite{Le12} for an extensive review on the subject, including the more recent topics and developments. 

Pluripotential Theory is deeply related to complex analysis, approximation theory, algebraic and differential geometry, random polynomials and matrices, as discussed e.g.,  \cite{Le06,De07,BlLe13,Ba16,BlLeWi15}. Despite this rather wide range of applications, there are very few examples in which analytic formulas for the main quantities of Pluripotential Theory are available. The aim of the present work is to compute a formula for one of these objects, the \emph{extremal plurisubharmonic function} $V_{\mathbb T}(\cdot,\mathcal T)$ (see Definition \ref{extremaldef} below) of the set of real points of the two dimensional torus, i.e., 
$$\mathbb T:=\mathcal T\cap \R^3,$$
considered as a compact subset of its algebraic complexification 
\begin{equation}
\label{torusdef}
\mathcal T:=\{z\in \C^3: (z_1^2+z_2^2+z_3^2-(R^2+r^2))^2=4R^2(r^2-z_3^2)\},\;\; 0<r<R<\infty.
\end{equation}
Geometrically, the set $\mathbb T$ is obtained as surface of revolution rotating the real circle $\{(z_1-R)^2+ z_2^2=r^2,z_3=0\}\cap \R^3$ with respect to the $\Re z_3$ axis. Note that $\mathcal T$ can be equivalently described by
\begin{equation}
\label{torusdef2}
\mathcal T=\{z\in \C^3: (z_1^2+z_2^2+z_3^2+R^2-r^2)^2=4R^2(z_1^2+z_2^2)\},\;\; 0<r<R<\infty.
\end{equation}

The motivation of this investigation is three-fold. First, building examples of explicit formulas for extremal plurisubharmonic functions has its own interest since the known cases are so few. Second, the torus is a very classical set in various branches of Mathematics, so it would be interesting to specialize to the case of the torus certain applications of Pluripotential Theory, e.g., approximation of functions, orthogonal expansions, polynomial sampling inequalities and optimization, random polynomials, random arrays, and determinantal point processes. Lastly we mention our original motivation. Let $(M,g)$ be a compact real analytic Riemannian manifold. It has been proven that (an open bounded subset of) the Riemann foliation of the real tangent space $TM$ admits a natural complex structure such that the leaves of the Riemannian foliation glue together to construct a complex (Stein) manifold $X$ in which $M$ embeds as a totally real submanifold \cite{LeSz91}. The Stein manifold $X$ is termed the \emph{Grauert tube} of $M$. The most relevant two examples of unbounded Grauert tubes (see \cite{PaWo91} and references therein), are relative to the real sphere and real projective space. Their construction is in some sense canonical, not only from the Riemannian perspective, but also from the pluripotential point of view. In this last situation two elements are crucial: the algebraicity of the starting real manifold and the fact that the Baran metric \cite{BoLeWa08} ( a specific Finsler metric that can be defined starting from the extremal plurisubharmonic function) is Riemannian. 

Analogous investigations for the torus, that will be the subject of our future studies, necessitate the computation of the extremal plurisubharmonic function for the torus. This is accomplished in the present paper as stated in the following theorem.
\begin{theorem}\label{mainresult}
The extremal plurisubharmonic function of the torus (of radii $0<r<R<\infty$) is
\begin{align}\label{mainformula}
V_{\mathbb T}(z,\mathcal T)&:=\log h\Bigg\{\max\Bigg[ \frac{|\sqrt{1-(z_3/r)^2}+1|+|\sqrt{1-(z_3/r)^2}-1|}{2},\\
&\;\;\;\;\;\;\;\;\;\left|\frac{\sqrt{z_1^2+z_2^2}+z_2}{2(r+R)}\right|+\left|\frac{\sqrt{z_1^2+z_2^2}-z_2}{2(r+R)}\right|+\left|\frac{\sqrt{z_1^2+z_2^2}}{(r+R)}-1\right|\Bigg] \Bigg\},  \notag 
\end{align} 
where equality holds only for $z\in \mathcal T$ and $h(z):=z+\sqrt{z^2-1}$ is the inverse \emph{Joukowski function}, see \eqref{Joukowski} below.
\end{theorem}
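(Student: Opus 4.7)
My plan is to bracket $V_{\mathbb T}(\cdot,\mathcal T)$ above and below by the right-hand side $U(z):=\log h(\max[A,B])$ of \eqref{mainformula}. Two ingredients: explicit polynomial sequences on $\mathcal T$ yielding $V_{\mathbb T}\geq U$ via Siciak's characterization, and direct verification that $U$ itself is an admissible candidate in the envelope defining $V_{\mathbb T}$, giving $V_{\mathbb T}\leq U$.

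The first term $\log h(A)$ is produced by the Chebyshev polynomials $p_n(z):=T_n(z_3/r)$, of degree $n$ in $z_3$ alone and bounded by $1$ on $[-r,r]\supset\pi_{z_3}(\mathbb T)$, whose Bernstein--Walsh growth equals $V_{[-1,1]}(z_3/r)$. Writing $z_3=r\sin\alpha$ and using the elementary identity $\rho(\cos\alpha)=\rho(\sin\alpha)=\cosh(\Im\alpha)$ (where $\rho(w):=(|w+1|+|w-1|)/2$) yields $V_{[-1,1]}(z_3/r)=V_{[-1,1]}(\sqrt{1-(z_3/r)^2})=\log h(A)$, as required. For the second term the natural building blocks are the angular polynomials $(z_2+iz_1)^n+(z_2-iz_1)^n=2v^n T_n(z_2/v)$ with $v:=\sqrt{z_1^2+z_2^2}$, which (after division by $2(R+r)^n$) are polynomials on $\mathcal T$ of degree $n$ bounded by $1$ on $\mathbb T$ since $|v|\leq R+r$ and $|T_n(z_2/v)|\leq 1$ there, together with the radial Chebyshev $T_m((z_1^2+z_2^2-R^2-r^2)/(2Rr))$ bounded by $1$ on the annulus $\pi_{z_1,z_2}(\mathbb T)$; the supremum of growth rates over suitable products of these families should reproduce precisely $\log h(B)$.

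For the upper bound, $U|_{\mathbb T}=0$ follows from the direct substitution $A=B=1$ on $\mathbb T$. That $U\in\psh(\mathcal T)$ with logarithmic growth at infinity follows from writing $\log h(A)$ as the pullback of the one-variable Green function $V_{[-1,1]}$ under the holomorphic map $z\mapsto z_3/r$ on $\mathcal T$, and $\log h(B)$ as an analogous pullback adapted to the fibration $\mathcal T\to\mathbb C_{z_3}$; the maximum preserves plurisubharmonicity. Maximality $(\ddc U)^2=0$ on $\mathcal T\setminus\mathbb T$ is a local issue: on the open subset where $\log h(A)>\log h(B)$ the function $U$ depends only on $z_3$ and is pluriharmonic; symmetrically on the complement; across the crease the maximum of two maximal PSH functions remains maximal. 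The main obstacle, and where the bulk of the technical work must lie, is the upper-bound step: verifying plurisubharmonicity, single-valuedness and the asymptotics of $U$ through the branch locus $\{z_3=\pm r\}$ of $\mathcal T$ (where the two algebraic branches of $\sqrt{r^2-z_3^2}$ coalesce and the natural parametrization degenerates), and ensuring that the polynomial sup in the lower bound also recovers $\log h(B)$ on the inner algebraic branch $v=R-\sqrt{r^2-z_3^2}$, where the naive normalization by $(R+r)^n$ is no longer sharp.
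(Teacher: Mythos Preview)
Your approach is genuinely different from the paper's (which proceeds by a Klimek-type polynomial change of variables $\Psi:\mathcal T\to\mathcal C$ to a parabolic cylinder, then a projection to $\C^2$ landing in $P$-pluripotential theory, and finally a reduction to the extremal function of a real trapezoid $K\subset\R^2$), but it contains a real gap at the decisive step. Your ``upper bound'' rests on showing that $U=\max\{\log h(A),\log h(B)\}$ is \emph{maximal} on $\mathcal T\setminus\mathbb T$, and you dispose of the crease by asserting that ``the maximum of two maximal PSH functions remains maximal.'' This is false in general: already in $\C^2$, $\max\{\log^+|z_1|,\log^+|z_2|\}=V_{\overline{\mathbb D}^2}^*$ has Monge--Amp\`ere mass on the torus $|z_1|=|z_2|=1$, although each piece is pluriharmonic there. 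Whether the particular max in \eqref{mainformula} happens to be maximal off $\mathbb T$ is precisely the nontrivial content of the theorem; in the paper this is what forces the extremal-ellipse argument (your Lemma~\ref{SioneLemma} analogue) after the reduction to the trapezoid, showing that every leaf of the Monge--Amp\`ere foliation for $K$ is already extremal for one of the two larger sets $K_1\times\C$ or $K_2$. Without an argument of that kind you have no upper bound. (Note also a slip in the logic: ``$U$ is an admissible candidate in the envelope'' yields $U\le V_{\mathbb T}$, not $V_{\mathbb T}\le U$; you need maximality plus a domination principle to reverse it, which is why the maximality step cannot be finessed.)

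There are secondary issues as well. For the lower bound on the $B$-term you only sketch families of polynomials and concede that on the inner sheet $\sqrt{z_1^2+z_2^2}=R-\sqrt{r^2-z_3^2}$ the normalization by $(R+r)^n$ is not sharp; recovering $V_\Sigma^*$ of the simplex (which is what $\log h(B)$ is, after an affine map) from products of one-variable Chebyshev families is not automatic and would itself require something like the extremal-curve analysis. And the claim that $\log h(B)$ is ``an analogous pullback adapted to the fibration $\mathcal T\to\C_{z_3}$'' is not right as stated: $B$ depends on $z_1,z_2$ through $\sqrt{z_1^2+z_2^2}$ and $z_2$, and one must use the torus equation to see that $\sqrt{z_1^2+z_2^2}$ restricts to a \emph{polynomial} on $\mathcal T$ before any pullback argument can be made rigorous. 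The paper handles all of this in one stroke by working in the $\Psi$-image, where the square roots become honest coordinates and the problem collapses to computing $V_K^*$ for a planar convex body.
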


\begin{remark}
It is worth saying that, even though the maximization procedure of formula \eqref{mainformula} may cause in principle a rather irregular behavior of $V_{\mathbb T}^*(\cdot,\mathcal T)$, the plot we made seem to reveal that indeed $V_{\mathbb T}^*(\cdot,\mathcal T)$ is quite smooth away from $\mathbb T.$ Since so far we have only partial results on this aspect, we display some sections of the graph of $V_{\mathbb T}^*(\cdot,\mathcal T)$ in Figure \ref{figureV}. Note in particular that the apparent jump in the derivative in the last two pictures of Figure \ref{figureV} is confined to the singular set of $\mathcal T$ (two leaves comes together) and it is actually only due to the choice of the branch of the square root when defining the local coordinates $(z_1,z_2)\mapsto(z_1,z_2,z^{(h)}(z_1,z_2))$ for $\mathcal T$ that are used in the proof of formula \eqref{mainformula}. This is evident in Figure \ref{Figuresmoothness} below, where the two branches are plotted together.
\end{remark}
 
\vskip 1cm
\textbf{Acknowledgements.} Obviously the present work has been deeply influenced by the discussions that I had with Norm Levenberg, both during and after my PhD period. This is fairly evident also by the number of papers of Professor Levenberg appearing in the references. Norm was the first one teaching me Pluripotential Theory and he is responsible of my fascination for the subject, which is a direct consequence of his enthusiastic lectures.

The first version of the present work contained an error which has been pointed out by Miros\l{}aw Baran during the \emph{multivariate polynomial approximation and pluripotential theory} section of the DRWA18 workshop. Sione Ma`u find out in few hours how to fix the issue: the last part of the proof of Theorem \ref{mainresult} is essentially due to him.  
\vskip 1cm
\subsection{Pluripotential Theory in the Euclidean setting} 
Before proving equation \eqref{mainformula}, we recall for the reader's convenience some notation, definitions, and basic facts from Pluripotential Theory in different settings. 

\emph{Plurisubharmonic functions} on a domain $\Omega\subset\C^n$ are functions that are globally uppersemicontinuous and subharmonic on the intersection with $\Omega$ of any complex line (or analytic disk in $\Omega$). We denote this set of functions by $\psh(\Omega).$ If $u\in\mathscr C^2(\Omega)$ the condition $u\in \psh(\Omega)$ reduces to the positivity of the $(1,1)$ differential form
$$\ddc u=2 i\partial\bar \partial u:= \sum_{j=1}^n \frac{\partial^2 u}{\partial z_j\partial\bar  z_j}(z) dz_j\wedge d\bar z_j.$$    
Here we used the classical notation for exterior and partial differentiation in $\C^n$, i.e.,
\begin{align*}
 &\di:=(\partial+\bar\partial)\;,\;\;\;\;\;\;\dic:=i(\bar\partial-\partial ),\\
 &\partial u:=\sum_{j=1}^n \frac{\partial u}{\partial z_j}dz_j\;,\;\;\;\;\;\;\bar \partial u:=\sum_{j=1}^n \frac{\partial u}{\partial \bar z_j}d\bar z_j
 \end{align*}
If a function $u$ is uppersemicontinuous and locally integrable, we can still check if it is a plurisubharmonic functions by checking the positivity of the \emph{current} (i.e., differential form with distributional coefficients) $\ddc u.$ We refer the reader to \cite{Kl91} for an extensive treatment of the subject.

The complex \emph{Monge Ampere} operator $\ddcn{}$ can be defined for $\mathscr C^2(\Omega)$ functions by setting
$$\ddcn{u}:=\ddc u\wedge \ddc u \wedge \dots\dots \wedge \ddc u= c_n \det [\partial_h\bar\partial_k u]_{h,k} \di{\vol}_{\C^n}.$$ 
Note that, in contrast with $\ddc,$ this is a fully non linear differential operator, therefore its extension to functions that are not $\mathscr C^2(\Omega)$ is highly non-trivial..
 
In the seminal paper \cite{BeTa82}, Bedford and Taylor  extended such a definition to locally bounded plurisubharmonic functions by an inductive procedure on the dimension $n.$ In this more general setting $\ddcn{u}$ is a \emph{measure}. Further extensions have been carried out more recently \cite{Ce98}.

Among all plurisubharmonic functions, \emph{maximal plurisubharmonic functions} play a very special role: one can think of the relation of this subclass with $\psh$ as an analogue to the relation of harmonic functions with subharmonic functions in $\C.$ Indeed a plurisubharmonic function $u$ is maximal in $\Omega$ if for any subdomain $\Omega'\subset \Omega$ and for any $v\in \psh(\Omega)$ such that $u(z)\geq v(z)$ for any $z\in \partial \Omega'$, it follows that $u\geq v$ in $\Omega'.$  Perhaps more importantly to our aims, a maximal plurisubharmonic function $u$ on $\Omega$ is characterized by (being plurisubharmonic and) $\ddcn{u}\equiv 0$ on $\Omega$ in the sense of measures.

In Pluripotential Theory there is an analogue of the Green Function with pole at infinity. Let $K\subseteq \C^n$ be a compact set. One can consider the upper envelope
\begin{equation}\label{zaharjuta}
V_K(z):=\sup\{u(z): u\in \mathcal L(\C^n), u(w)\leq 0\; \forall w\in K\},
\end{equation}
where $\mathcal L(\C^n)$ is the Lelong class of plurisubharmonic functions on $\C^n$ having logarithmic growth, i.e., $u\in \mathcal L(\C^n)$ if $u$ is plurisubharmonic on $\C^n$ and for any $M$ large enough there exists a constant $C$ such that $u(z)\leq \log|z|+C$ for any $|z|>M.$
There are two possible scenarios. Either $V_K$ is not locally bounded and in this case the set $K$ is termed pluripolar (roughly speaking it is too small for pluripotential theory). Or $V_K$ is locally bounded and its uppersemicontiunuous regularization
\begin{equation}\label{zaharjuta*}
V_K^*(z):=\limsup_{w\to z}V_K(w),
\end{equation}  
which is called \emph{Zaharjuta extremal plurisubharmonic function}, is a locally bounded plurisubharmonic function on $\C^n$ which is maximal on $\C^n\setminus K$, that is
$$\ddcn{V_K^*}=0 \text{ on }\C^n\setminus K.$$ 

One important fact about $V_K$ is that one can recover the same function by a different upper envelope. More in detail, following \cite{Si81}, we have 
$$V_K(z)=\log^+\Phi_K(z),$$
where the \emph{Siciak extremal function}  $\Phi_K$ is defined by setting
\begin{equation}\label{Siciak}
\Phi_K(z):=\lim_{j\to \infty}\left(\sup\{|p(z)|, p\in\wp^j(\C), \|p\|_K\leq 1\}\right)^{1/j}.
\end{equation}
Here $\wp^j(\C^n)$ denotes the space of polynomial functions with complex coefficients on $\C^n$ whose total degree is at most $j\in \N$ and  $\|\cdot\|_K$ denotes the uniform norm on $K.$

The function $V_K^*(z)=\log\Phi_K^*(z)$ is referred as the \emph{pluricomplex Green function with pole at infinity} of $\C^n\setminus K$ or as the \emph{extremal plurisubharmonic function} of $K.$ 

Computing extremal plurisubharmonic functions is, in general, a very hard task. There are very few examples (see \cite{BoLeMaPi17,BlBoLeMaPi18}) of settings in which it has been computed by various (hard to be generalized) techniques. Numerical methods for the approximation of extremal functions have been developed in \cite{Pi18}. An exceptional case is the one of $K$ being a centrally symmetric \emph{convex real body}. In such a case the Baran formula (see \eqref{Baran} below) gives an analytic expression for $V_K^*$, moreover even deeper properties of this function have been  
studied (see \cite{BuLeMa05,BuLeMaRe10,BuLeMa15}) as the structure of the Monge Ampere foliation of $\C^n\setminus K$ and the density of $\ddcn{V_K^*}$ with respect to the $n$ dimensional Lebesgue measure on $K$, \cite{Ba92,Ba92b}. 

Let $K\subset \R^n$ be a real convex body. We denote by $K^*$ its polar set
$$K^*:=\{x\in \R^n: \langle x;y\rangle\leq 1,\forall y\in K\}$$
and by $\extr K^*$ the set of its \emph{extremal points}. That is, $x\in \extr K^*$ if $x\in K^*$ and $x$ is not an interior point of any segment lying in $K^*$ (note that $K^*$ is convex by definition but it may be not strictly convex).

We denote by $h:\C\rightarrow \C$ the \emph{inverse Joukowski map}
\begin{equation}\label{Joukowski}
h(z):z+\sqrt{z^2-1}.
\end{equation}
Baran \cite{Ba92} proved that, for any centrally symmetric convex real body,
\begin{equation}\label{Baran}
V_K^*(z)=\sup_{y\in \overline{\extr K^*}}\log |h(\langle z,y\rangle)|.
\end{equation} 

\subsection{Pluripotential Theory on algebraic varieties}
Let $A$ be an algebraic variety having pure dimension $m,$ $1\leq m<n.$  Take any set of local defining function $f_1,f_2,\dots,f_k$ for $A$ and consider the function
$$f(z):=\max_{j\in \{1,2,\dots,k\}}\log |f_j(z)|.$$
This is a a plurisubharmonic function  on $\C^n$ such that $A$ is (locally) contained in the $\{-\infty\}$ set of it. This is (see for instance \cite{Kl91}) equivalent to the fact that $A$ is locally pluripolar, that is \emph{too small for $n$ dimensional pluripotential theory}. On the other hand, the set $\areg$ of the regular points of $A$ is an $m-$dimensional complex manifold so $m$-dimensional pluripotential theory is well defined on it by naturally extending the definitions given in the Euclidean setting using local holomorphic coordinates. Then, using the nice properties of coordinate projections \cite{Ch89}, it is possible to define pluripotential theory (and in particular the complex Monge Ampere operator and an associated capacity) on the set $A,$ \cite{Ze91,De85,Be82}.

Up to this point, this procedure can be carried out on analytic sets. In contrast, if we want to deal with the restriction of polynomials of $\C^n$ to $A$, the algebraicity of $A$ becomes determining as shown by the following fundamental result.
\begin{theorem}[Sadullaev \cite{Sa82}]\label{SadResult}
Let $A\subset\C^n$ be a irreducible analytic subvariety of $\C^n$ of dimension $1\leq m<n.$ Then $A$ is algebraic if and only if there exists a compact set $K\subset A$ such that
\begin{equation}
V_K(z,A):=\log^+\lim_{j\to \infty}\left(\sup\left\{|p|,p\in \wp^j(\C^n):\;\|p\|_K\leq 1\right\}\right)^{1/j}
\end{equation} 
is locally bounded on $A.$
If this is the case, then $V_L(\cdot,A)$ is locally bounded for any compact set $L\subset A$ such that $L\cap \areg$ is not pluripolar in $\areg$ and the function
\begin{equation}\label{Sadullaev}
V_L^*(z,A):=\limsup_{\areg\ni \z\to z}V_L(\z,A)
\end{equation}
is maximal in $A\setminus L,$ i.e.,
\begin{equation}
(\ddc V_L^*(z,A))^m=0,\;\;\text{ in }A\setminus L.
\end{equation}
\end{theorem}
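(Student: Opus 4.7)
The plan is to treat the biconditional as two implications and to handle the maximality addendum as a separate final step. Both implications exploit the dictionary between polynomial growth of elements of $\wp^j(\C^n)|_A$ on $A$ and the asymptotic geometry of $A$ at infinity in $\mathbb{CP}^n$.

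For the forward direction, assume $A$ is algebraic of pure dimension $m$. After a generic unitary change of coordinates in $\C^n$, Noether normalization guarantees that $\pi(z):=(z_1,\dots,z_m)$ restricts to $A$ as a proper finite branched cover $\pi:A\to\C^m$ of some degree $d$. Pick any non-pluripolar compact $K_0\subset\C^m$ and set $K:=\pi^{-1}(K_0)\cap A$. The integral-dependence relations inherent in the defining equations of $A$ exhibit each $p\in\wp^j(\C^n)|_A$ as a root of a monic polynomial of degree $d$ in the fibre coordinate, whose coefficients are polynomials on $\C^m$ of degree $\le Cj$. Applying the classical Bernstein-Walsh inequality on $\C^m$ to these coefficients yields the bound $V_K(z,A)\le V_{K_0}(\pi(z))+O(1)$, which is locally bounded on $A$.

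The converse is the main obstacle. Suppose $V_K(\cdot,A)$ is locally bounded on $A$. Bernstein-Walsh, built into the definition of $V_K$, applied to the coordinate functions and their monomial powers gives $\sup_{A\cap B(0,R)}|z|^j\le\|z^j\|_K\exp(j\sup_{A\cap B(0,R)}V_K(\cdot,A))$ for any $j$ and $R$. This forces controlled polynomial growth of $A$ at infinity, in particular finite order in the sense of volume. An algebraicity criterion then concludes: either Stoll's theorem (pure-dimensional analytic subvarieties of $\C^n$ of finite order are algebraic), or a Hilbert-function argument showing that $\dim\wp^j(\C^n)|_A$ grows polynomially in $j$ followed by Chow's theorem applied to the closure $\bar A\subset\mathbb{CP}^n$, delivers the algebraicity of $A$. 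The delicate point is upgrading a pointwise growth estimate on restrictions of polynomials to a genuine geometric bound on the variety itself.

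For the maximality assertion, assume $A$ is algebraic and $L\subset A$ is compact with $L\cap\areg$ non-pluripolar in $\areg$; local boundedness of $V_L(\cdot,A)$ follows from the first argument applied with $L$ in place of the reference non-pluripolar set. To prove $(\ddc V_L^*(\cdot,A))^m=0$ on $A\setminus L$, fix $z_0\in A\setminus L$. If $z_0\in\areg$, choose a local holomorphic parametrization $\phi:U\to A$ with $U\subset\C^m$ open and pull $V_L^*$ back to a locally bounded plurisubharmonic function $u$ on $U$; since $u$ is the usc regularization of a Perron-type upper envelope, the classical Bedford-Taylor maximality argument in $\C^m$ yields $(\ddc u)^m=0$ on $U\setminus\phi^{-1}(L)$. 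If instead $z_0\in\asing$, the conclusion is extended across $\asing$ using that $\asing$ is a proper analytic subvariety of $A$, hence pluripolar in $A$, and that the Monge-Ampère measure of a locally bounded plurisubharmonic function places no mass on any pluripolar set.
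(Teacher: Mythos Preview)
The paper does not contain a proof of this theorem. Theorem \ref{SadResult} is stated with attribution to Sadullaev \cite{Sa82} and is used as a black box (see the Remark following equation \eqref{VTVP}); no argument for it appears anywhere in the paper. Consequently there is nothing in the paper against which to compare your proposal.

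As for the proposal itself, the overall architecture is the right one and mirrors Sadullaev's original strategy: the forward direction via a proper finite projection $\pi:A\to\C^m$ (Rudin/Noether normalization) and Bernstein--Walsh on the base, the converse via a growth-to-algebraicity criterion, and maximality via the Perron envelope characterization on $\areg$ plus the fact that Monge--Amp\`ere measures of locally bounded psh functions do not charge pluripolar sets. Two places deserve more care if you intend this as an actual proof rather than a plan. First, in the forward direction, the passage from integral dependence of the \emph{coordinate functions} $z_{m+1},\dots,z_n$ over $\C[z_1,\dots,z_m]$ to a uniform bound on \emph{all} $p\in\wp^j(\C^n)|_A$ is not immediate; one needs either to symmetrize over the sheets of $\pi$ to produce genuine polynomials on $\C^m$ of controlled degree, or to invoke the Rudin inequality $|z|\le C(1+|\pi(z)|)$ on $A$ directly. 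Second, in the converse, local boundedness of $V_K(\cdot,A)$ gives Bernstein--Walsh-type control of polynomials on $A$, but turning this into a volume or projective-degree bound on $A$ itself (so that Stoll/Bishop or Chow can be applied) is exactly the substantive content of Sadullaev's argument; your sketch correctly flags this as ``the delicate point'' but does not indicate how to bridge it.
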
 
The extremal plurisubharmonic function $V_K^*(\cdot,A)$ defined by Sadullaev can be understood as the natural counterpart of the (log of the) Siciak type extremal function defined in \eqref{Siciak}. The definition of a Zaharjuta type (i.e., built by an upper envelope of plurisubharmonic functions of a given growth) extremal function in this setting can be view as a particular case of the \emph{pluricomplex green function for Stein Spaces with parabolic potential}, developed in \cite{Ze91}.
Let $A\subset\C^n$ an algebraic irreducible variety of pure dimension $m<n$, then we denote by $\mathcal L(A)$ the class of locally bounded functions $u$ on $A$ that are  plurisubharmonic on $\areg$ such that for some constant (depending on $u$) we have
$$u(z)<\max_{j=1,2,\dots, n}\log|z_i|+c_u$$
for any $z\in A$ and $|z|$ large enough. In this \emph{simplyfied} setting we have the following.
\begin{theorem}[Zeriahi \cite{Ze91}]
Under the above assumption the Siciak and the Zaharjuta type extremal functions are the same, that is
\begin{equation}\label{Zeriahi}
V_K^*(z,A)=\limsup_{\z\to z}\sup\left\{u(\z)\in \mathcal L(A): u\leq 0\text{ on } K  \right\}
\end{equation}
for any compact set $K\subset A$.
\end{theorem}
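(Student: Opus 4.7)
Let
\[
W_K(\zeta):=\sup\{u(\zeta): u\in\mathcal L(A),\ u\leq 0\text{ on }K\},\qquad W_K^*(z):=\limsup_{\zeta\to z}W_K(\zeta),
\]
so that the statement asserts $V_K^*(\cdot,A)=W_K^*$ on $A$. I would prove the two inequalities separately.

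\emph{Easy direction ($V_K^*\leq W_K^*$).} For any $p\in\wp^j(\C^n)$ with $\|p\|_K\leq 1$, the function $\tfrac{1}{j}\log|p|$ is plurisubharmonic on $\C^n$, non-positive on $K$, and obeys $\tfrac{1}{j}\log|p(z)|\leq \max_i\log|z_i|+c$ for $|z|$ large because $\deg p\leq j$. Its restriction to $A$ therefore lies in $\mathcal L(A)$ and is one of the competitors in the supremum defining $W_K$. Taking suprema over $p$ and $j$ and passing to upper semicontinuous regularizations yields $V_K^*(\cdot,A)\leq W_K^*$.

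\emph{Hard direction ($W_K^*\leq V_K^*$).} Given $u\in\mathcal L(A)$ with $u\leq 0$ on $K$, the task is to approximate $u$ from below by $\tfrac{1}{j}\log|p_j|$ with $p_j\in\wp^j(\C^n)$ and $\|p_j\|_K\leq e^{\epsilon j}$ for any prescribed $\epsilon>0$. My plan is a H\"ormander $L^2$ construction: choose a generic linear projection $\pi:\C^n\to\C^m$ whose restriction to $A$ is proper and finite (possible by the algebraicity of $A$), form the trace $\tilde u:=\pi_*u\in\mathcal L(\C^m)$ obtained by summing $u$ over the sheets of $\pi|_A$, and apply H\"ormander's $L^2$-existence theorem on $\C^m$ with psh weight of the form $2j\tilde u+C(\epsilon)\log(1+|w|^2)$, where $C(\epsilon)$ is chosen so that $L^2$-solutions are forced to be polynomials of degree at most $j$. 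Combined with a Bernstein--Walsh truncation of $u$ near $K$, this produces polynomials $q_j\in\wp^j(\C^m)$ with $\tfrac{1}{j}\log|q_j|\geq \tilde u-\epsilon$ and $\|q_j\|_{\pi(K)}\leq e^{\epsilon j}$. Setting $p_j:=q_j\circ\pi\in\wp^j(\C^n)$ gives the estimate $u\leq V_K(\cdot,A)+\epsilon$ on $\areg$ outside the critical locus of $\pi|_A$, which is a pluripolar subset of $A$. Sending $\epsilon\downarrow 0$ and regularizing yields $u\leq V_K^*(\cdot,A)$ on $\areg$, and extension across $\asing$ is automatic because $V_K^*(\cdot,A)$ is already locally bounded on all of $A$ by Sadullaev's Theorem~\ref{SadResult}.

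\emph{Main obstacle.} The delicate point is the simultaneous degree and sup-norm control in the H\"ormander step: one must keep $\deg p_j\leq j$ (which dictates the normalization of the logarithmic factor in the weight) while also ensuring that the Lelong-class growth of $u$ on $A$ is faithfully transferred to the growth of $\tilde u$ on $\C^m$ under the proper finite projection $\pi_*$. The first is a matter of standard H\"ormander--Bombieri bookkeeping, while the second rests on Sadullaev's characterization of algebraicity (Theorem~\ref{SadResult}), which is precisely what guarantees that $\pi_*$ maps $\mathcal L(A)$ into $\mathcal L(\C^m)$ (up to a multiplicative constant equal to the generic sheet number). A secondary technical nuisance is the passage across $\asing$ and across the branch locus of $\pi$, both handled by the fact that $V_K^*(\cdot,A)$ is globally locally bounded and these loci are pluripolar.
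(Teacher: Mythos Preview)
The paper does not prove this theorem at all: it is stated with attribution to Zeriahi \cite{Ze91} and used as a black box, so there is no ``paper's own proof'' to compare against.

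That said, let me comment on your outline. The easy direction is fine. In the hard direction, however, the push-forward/pull-back scheme has a genuine gap. If $\pi:A\to\C^m$ is proper of degree $d$ and $\tilde u=\pi_*u=\sum_{\pi(z_i)=w}u(z_i)$, then for $z\in A$ one has $\tilde u(\pi(z))=u(z)+\sum_{z_i\neq z}u(z_i)$. An approximation $\tfrac{1}{j}\log|q_j|\geq \tilde u-\epsilon$ on $\C^m$ therefore yields, after pulling back, only
\[
\tfrac{1}{j}\log|q_j(\pi(z))|\;\geq\; u(z)+\sum_{z_i\neq z}u(z_i)-\epsilon,
\]
which does \emph{not} give $u(z)\leq \tfrac{1}{j}\log|q_j(\pi(z))|+\epsilon$ unless you control the other sheet values from below. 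There is no reason those should be nonnegative. Moreover $\tilde u$ has growth $d\log|w|$, so $q_j\circ\pi$ has degree roughly $dj$ rather than $j$, further degrading the inequality. The push-forward trick is good for producing psh functions on $\C^m$, but it averages information over the fibre and so cannot be inverted pointwise in the way you need.

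The argument in \cite{Ze91} bypasses polynomial approximation for this inequality: once Sadullaev's theorem gives that $V_K^*(\cdot,A)$ is locally bounded, lies in $\mathcal L^+(A)$, and satisfies $(\ddc V_K^*)^m=0$ on $A\setminus K$, a comparison (domination) principle on the parabolic Stein space $A$ forces every competitor $u\in\mathcal L(A)$ with $u\leq 0$ on $K$ to satisfy $u\leq V_K^*$. That is the idea you should aim for; the H\"ormander machinery is not needed here.
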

\begin{definition}[Plurisubharmonic extremal function]\label{extremaldef}
Since in our setting of an irreducible algebraic variety of pure dimension $m<n$ embedded in $\C^n$ the extremal functions \eqref{Sadullaev} and \eqref{Zeriahi} are the same, we refer to both of them as \emph{the plurisubharmonic extremal function} of $K.$
\end{definition}
\begin{remark}
We \textbf{warn the reader} that, for the sake of an easier presentation of the definitions and results, we restrict our attention to the case of \emph{irreducible} algebraic varieties. This avoids some ambiguity in the considered class of plurisubharmonic functions (namely any weakly plurisubharmonic function is actually plurisubharmonic \cite{De85}) and consequently in the definition of the extremal functions. Indeed, in our setting $u\in \psh(A)$ equivalently means that
\begin{itemize}
\item $u$ is the restriction to $A$ of a plurisubharmonic function $\tilde u:\Omega\rightarrow [-\infty,\infty[$, for some neighbourhood $\Omega$ of $A$ in $\C^n$ or 
\item the restriction $\hat u$ of the uppersemicontinuous function $u$ to the regular points $\areg$ of $A$ is a plurisubharmonic function on the complex manifold $\areg$.
\end{itemize}
\end{remark}
\subsection{Polynomial degrees and P-pluripotential Theory}
When dealing with multivariate polynomials, the concept of degree of a polynomial needs to be specified. Although the standard choice is to use the so-called \emph{total degree} $\deg$ (i.e., $\deg(z^\alpha):=|\alpha|_1$ and $\deg(\sum_{i=1}^{k} c_i z^{\alpha})=\max_{1\leq i\leq k}\{\deg(z^{\alpha_i}):c_i\neq 0\}$), for various applications it may be convenient to use different definitions of "degree". Classical examples of this are the so called \emph{tensor degree} and \emph{Euclidean degree} \cite{Tr17}, corresponding to the use of $\ell^\infty$ and $\ell^2$ norms instead of the $\ell^1$ norm in the definition of the function $\deg$, respectively. 

It  is clear that the definition of the Siciak type extremal function \eqref{Siciak} depends on which definition of degree is used, but the Lelong class $\mathcal L(\C^n)$ used in defining the Zaharjuta type extremal function \eqref{zaharjuta} a priori does not.

More in general one can define a degree $\deg_P$ on $\wp(\C^n)$ depending on a subset $P$ of $\R_+^n:=\{x\in \R^n:x_i\geq 0\;\forall i=1,2,\dots,n\}$  satisfying certain geometric properties. The study of this variant of Pluripotential Theory has been started very recently in \cite{BoLe17p,Ba17} and it is termed \emph{P-pluripotential Theory}. We recall a few facts about P-pluripotential Theory that we will need to use later on.

Let $P\subset \R_+^n$ be a convex set containing a neighborhood of $0$  (in the relative topology of $\R_+^n$). We denote consider the following polynomial complex vector spaces
\begin{equation}
\poly(k P):=\spann\{z^\alpha: \alpha\in k P\cap \N^n\},\;\forall k\in \N.
\end{equation}
Equivalently one can set
\begin{equation}
\deg_P(z^\alpha):=\inf_{k\in \N}\{k:\alpha\in kP\},\;\;\deg_P(\sum_{\alpha\in I}c_\alpha z^\alpha):=\max_{\alpha\in I,c_\alpha\neq 0}\deg_P(z^\alpha),
\end{equation} 
and $p\in \poly(kP)$ if and only if $p$ is a polynomial and $\deg_P(p)\leq k.$
Let us recall that the \emph{support function} $\phi_P$ of the convex set $P$ is defined as
$$\phi_P(x):=\sup_{y\in P}\langle x;y\rangle,\;\forall x\in \R_+^n.$$
We can use the support function of $P$ to introduce a dependence on $P$ in (a modified version of) the definition of the Lelong class. Namely, we define the \emph{logarithmic support function} $H_P$ and the $P$-Lelong class
\begin{align}
&H_P(z):=\phi_P(\log|z_1|,\log|z_2|,\dots,\log|z_n|).\label{logaritmicsupport}\\
&\mathcal L_P(\C^n):=\{u\in \psh(\C^n):\; u-H_P\text{ is bounded above for }|z|\to \infty\}.
\end{align}
Note that the standard case of total degree polynomials corresponds to picking $P=\Sigma$, the standard unit simplex. Indeed we have $\Sigma=\{y\in \R_+^n:|y|_1\leq 1\}$  and $\phi_\Sigma(x)=|x|_\infty$, so that $H_P(z)=\max_i\log|z_i|$ and $\mathcal L_\Sigma(\C^n)$ reduces to the classical Lelong class.

We can introduce a Zaharjuta type extremal function for any such $P$ and any compact set $K\subset \C^n$ setting
\begin{align}\label{P-zaharjuta}
&V_{K,P}(z):=\sup\{u(z):u\in \mathcal L_P(\C^n), u|_K\leq 0\},\\
&V_{K,P}^*(z):=\lim_{\z\to z}V_{K,P}(\z),
\end{align}
and a Siciak type extremal function by setting
\begin{equation}
\Phi_{K,P}(z):=\lim_k\sup\{|p|^{1/k}: p\in \poly(kP),\|p\|_K\},
\end{equation}
where the existence of the limit is part of the statement (see \cite{Ba17}) and if the limit is continuous the convergence improves from point-wise to locally uniform.

For any non pluripolar set $K$ we have
\begin{equation}
V_{K,P}(z)=\log^+ \Phi_{K,P}(z).
\end{equation}
Many results of Pluripotential Theory have been extended to the P-pluripotential setting. Among others, we recall for future use this version of the Global Domination Principle.
\begin{proposition}[P-Global Domination Principle \cite{LePe18}]\label{Pdomination}
Let $u\in \mathcal L_P(\C^n)$ and $v\in \mathcal L_P^+(\C^n),$ i.e. $v\in \mathcal L_P(\C^n)$ and $|v-H_P|(z)$ is bounded for $|z|\to \infty.$ Assume that $u\leq v$ $\ddcn{v}$-a.e., then 
$$u(z)\leq v(z),\;\forall z\in \C^n.$$
\end{proposition}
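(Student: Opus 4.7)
The plan is to adapt to the P-Lelong setting the classical proof of the Bedford--Taylor global domination principle in $\mathcal L^+(\C^n)$, relying on two standard facts about $\mathcal L_P^+(\C^n)$ established in \cite{Ba17,LePe18}: (i) the total Monge--Amp\`ere mass $M_P:=\int_{\C^n}\ddcn w$ is the same positive constant for every $w\in \mathcal L_P^+(\C^n)$; and (ii) (uniqueness) two functions in $\mathcal L_P^+(\C^n)$ with identical Monge--Amp\`ere measures differ by an additive constant.

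First I would reduce to the case $u\in \mathcal L_P^+(\C^n)$ by replacing $u$ with $\tilde u:=\max(u,v-A)$ for some $A>0$ large. The function $\tilde u$ lies in $\mathcal L_P^+(\C^n)$, being bounded below by $v-A\ge H_P-A-C$ and above by $H_P+C'$; moreover $\{\tilde u>v\}=\{u>v\}$ (since $v-A\le v$ automatically), so the $\ddcn v$-a.e.\ hypothesis is preserved, and pointwise $\tilde u\le v$ clearly implies $u\le v$.

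The heart of the proof is then the following. Set $w:=\max(u,v)$; since $w\ge v$ we also have $w\in\mathcal L_P^+(\C^n)$, and by (i), $\int\ddcn w=M_P=\int\ddcn v$. Demailly's pointwise inequality for the Monge--Amp\`ere of a maximum yields
\begin{equation*}
\ddcn w\ge \mathbf 1_{\{u>v\}}\,\ddcn u+\mathbf 1_{\{u\le v\}}\,\ddcn v
\end{equation*}
as positive Borel measures. Integrating and using the hypothesis $\ddcn v(\{u>v\})=0$ gives $M_P\ge \ddcn u(\{u>v\})+M_P$, which forces $\ddcn u(\{u>v\})=0$ and equality of total masses on the two sides of the displayed inequality. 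A positive measure with zero total mass is the zero measure, so the inequality above is in fact an identity of Borel measures; substituting yields $\ddcn w=\ddcn v$.

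Finally, fact (ii) gives $w=v+c$ for some constant $c\in\R$. Since $w=\max(u,v)\ge v$ one has $c\ge 0$; on the other hand, the set $\{u\le v\}$ has full $\ddcn v$-mass, hence is nonempty and meets the locus where $u,v$ are both finite, and at any such point $w=\max(u,v)=v$, forcing $c=0$. Therefore $\max(u,v)=v$ on $\C^n$, that is $u\le v$ pointwise everywhere, as required. The step requiring the most care is the invocation of the P-Lelong uniqueness (ii), which replaces the classical Bedford--Taylor uniqueness used in the case $P=\Sigma$; verifying that the hypotheses of the cited uniqueness result apply to the pair $(w,v)$ at hand is the only genuinely non-routine point of the argument.
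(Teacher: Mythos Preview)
The paper does not prove this proposition at all: it is quoted as a black box from \cite{LePe18} and used later as a tool, so there is no ``paper's own proof'' to compare against. Your task therefore reduces to whether the sketch stands on its own.

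The chain $u\mapsto\tilde u$, $w:=\max(u,v)$, Demailly's inequality, equality of total masses, and hence $\ddcn{w}=\ddcn{v}$ is correct and standard. The only point that deserves a caution is your step (ii), the uniqueness theorem in $\mathcal L_P^+(\C^n)$. In most treatments of (P-)pluripotential theory---including, as far as I know, \cite{LePe18}---the logical order is the reverse of yours: one first proves the global domination principle (typically by localizing and applying the Bedford--Taylor comparison principle on the open set $\{u>v+\varepsilon\}$, using the $\mathcal L_P^+$ growth to control what happens at infinity), and uniqueness up to an additive constant is then an immediate corollary. If the uniqueness statement you invoke is itself deduced from Proposition~\ref{Pdomination}, your argument is circular. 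It can be made non-circular if you instead cite an independent proof of uniqueness (e.g.\ via the variational/energy methods of Berman--Boucksom--Guedj--Zeriahi adapted to the $P$-setting, or Dinew-type arguments), but then you should say so explicitly. As written, the sketch is a correct \emph{reduction} of domination to uniqueness, not a self-contained proof; a referee would likely ask you either to point to a domination-free proof of (ii) or to replace step (ii) by the direct comparison-principle argument.
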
 
\begin{figure}
\caption{ Plots of $V_{\mathbb T}^*((z_1,z_2,z_3^{h}(z_1,z_2)),\mathcal T).$ Here $(z_1,z_2)\mapsto z_3^{h}(z_1,z_2),$ $h=1,2,3,4$ are the four leaves of $\mathcal T$, that is the four local inverses of the coordinate projection $\mathcal T\ni z\mapsto (z_1,z_2).$ By symmetry we need to look only at two of these leaves (i.e., $h=1,2$), as the graph of $V_{\mathbb T}^*$ is the same on the other two. From left to right and from above to below, we plot $\R^2\ni(z_1,z_2)\mapsto V_{\mathbb T}^*((z_1,z_2,z_3^{1}(z_1,z_2)),\mathcal T),$ $\R^2\ni(z_1,z_2)\mapsto V_{\mathbb T}^*((z_1,z_2,z_3^{2}(z_1,z_2)),\mathcal T),$ $\R^2\ni(\Re z_1,\Im z_1)\mapsto V_{\mathbb T}^*((z_1,0,z_3^{1}(z_1,0)),\mathcal T),$ $\R^2\ni(\Re z_1,\Im z_1)\mapsto V_{\mathbb T}^*((z_1,0,z_3^{2}(z_1,0)),\mathcal T).$}
\begin{center}
\label{figureV}
\begin{tabular}{cc}
\includegraphics[scale=0.4]{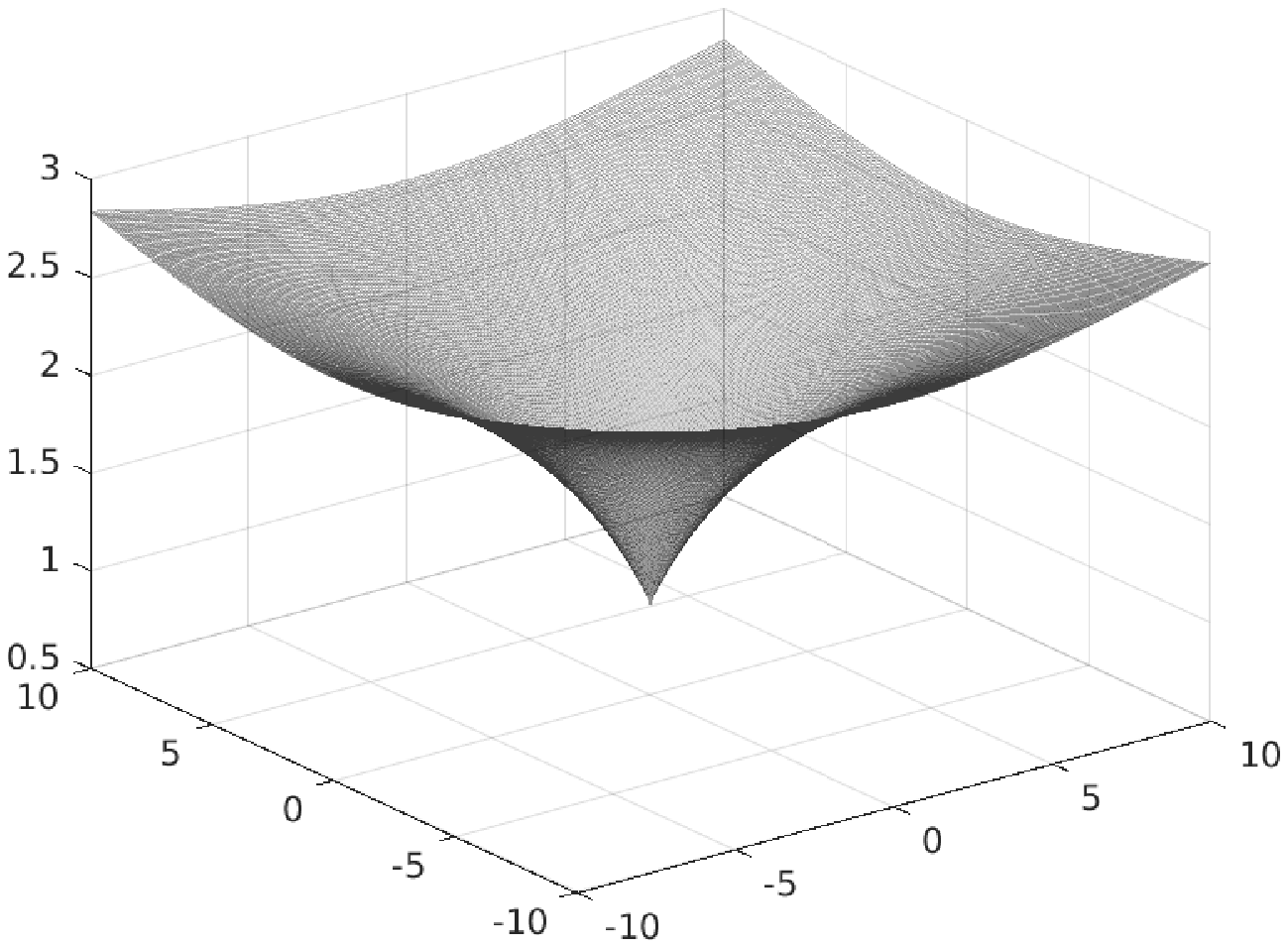}&\includegraphics[scale=0.4]{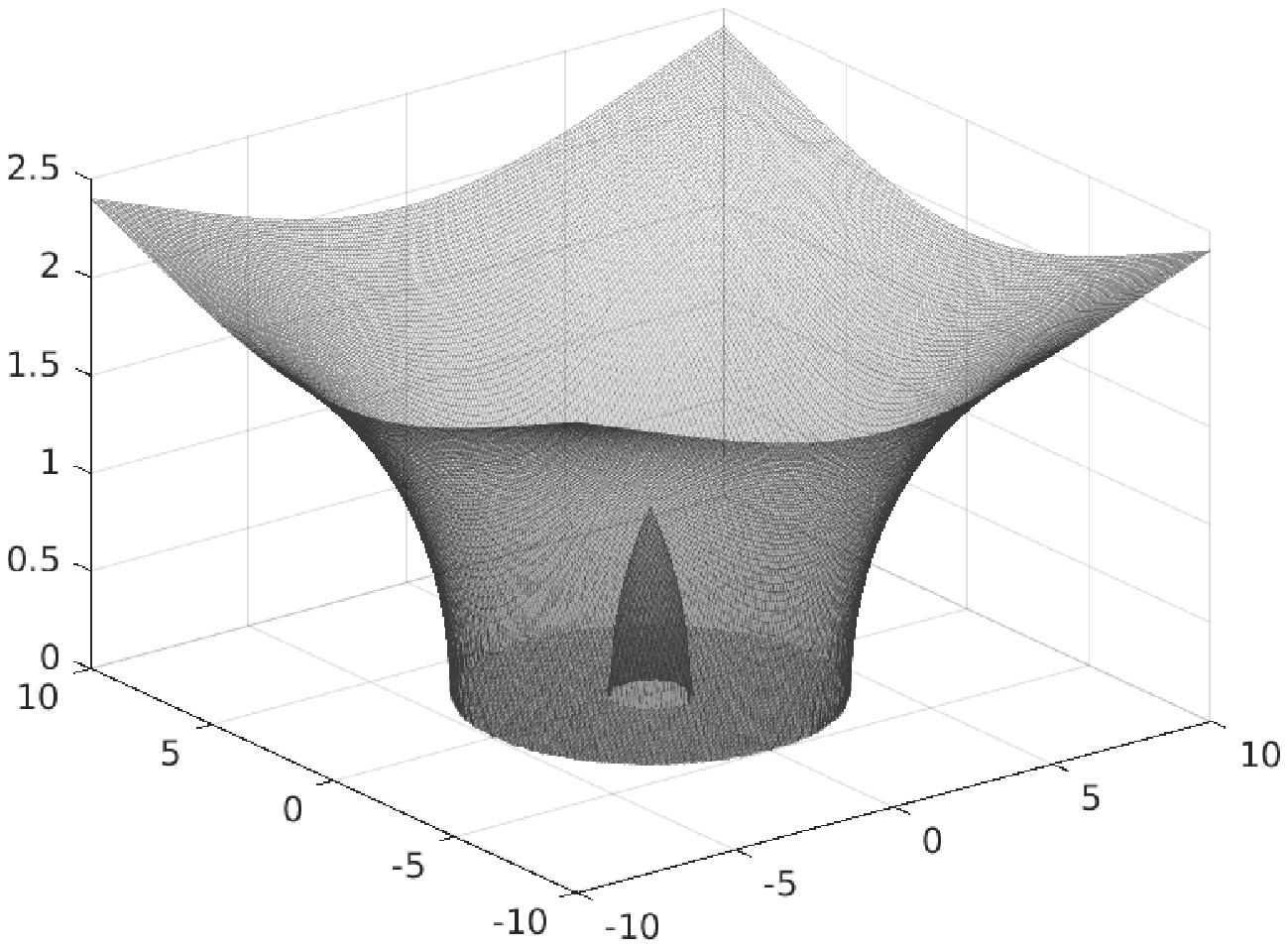}\\
\includegraphics[scale=0.4]{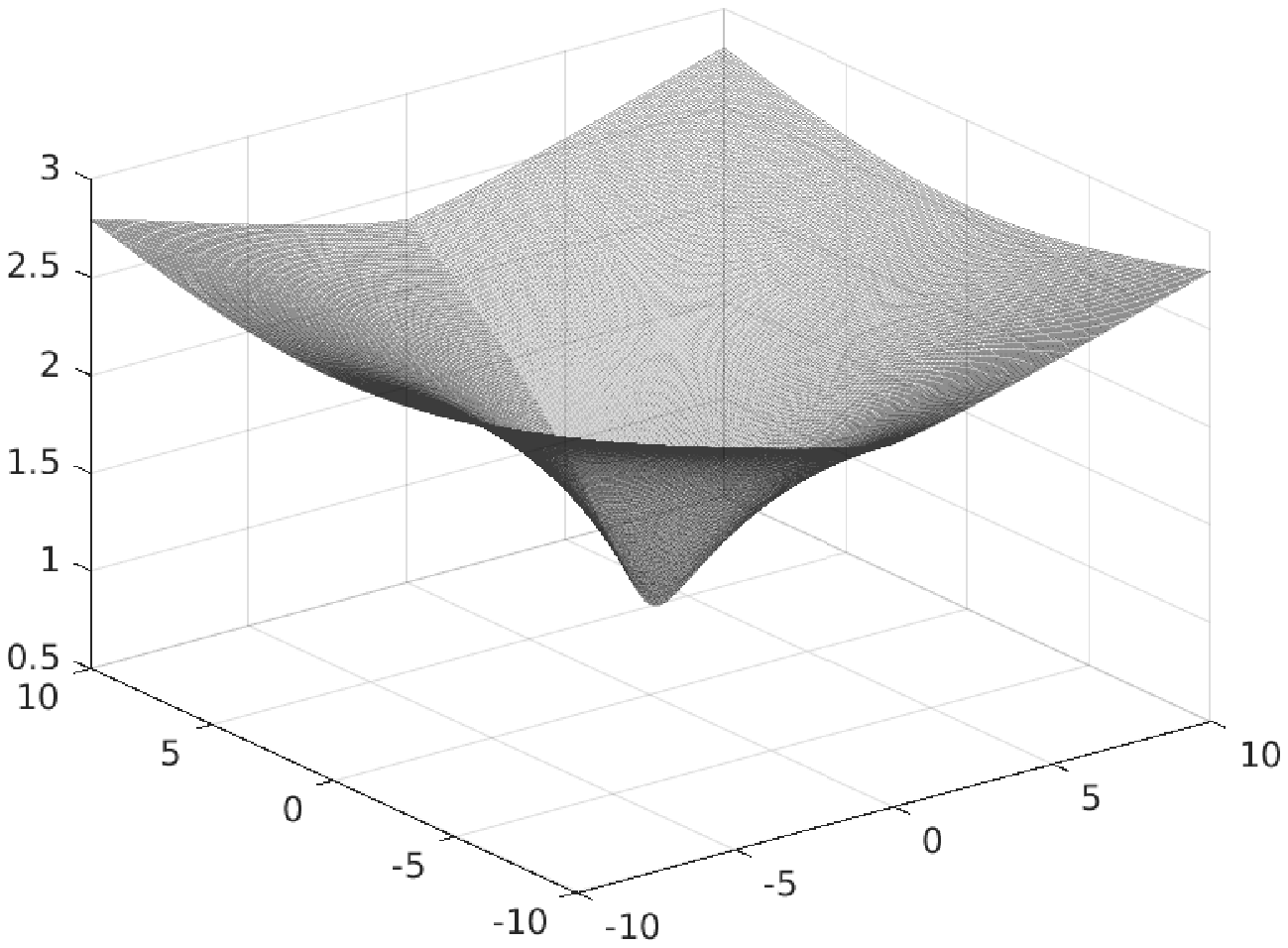}&\includegraphics[scale=0.4]{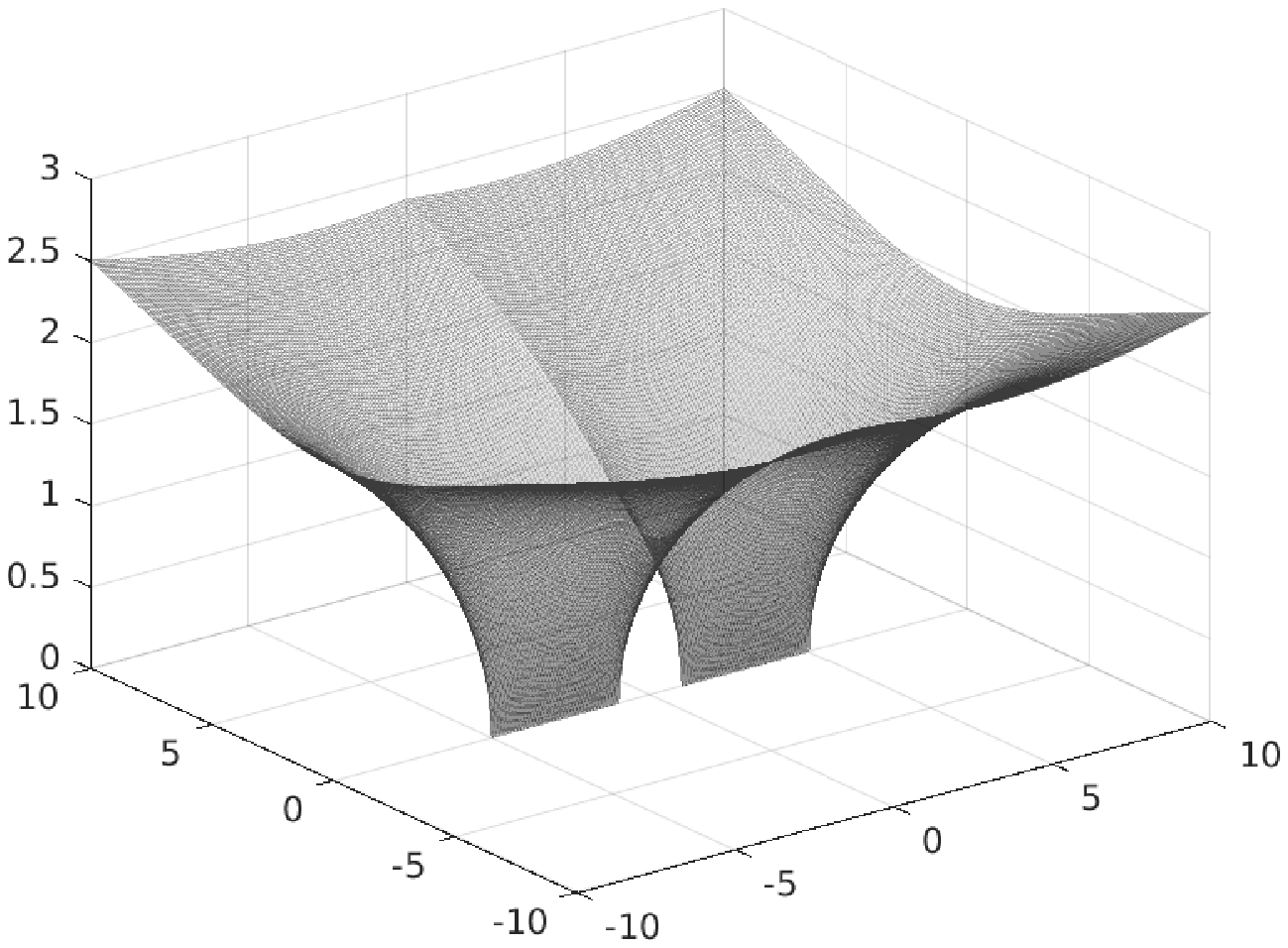}
\end{tabular}
\end{center}
\end{figure}

\begin{figure}
\caption{The multifunctions $\R^2\ni(\Re z_1,\Im z_1)\mapsto \{V_{\mathbb T}^*((z_1,0,z_3^{(1)}(z_1,0)),\mathcal T),V_{\mathbb T}^*((z_1,0,z_3^{(2)}(z_1,0)),\mathcal T)\}$ (above) and $\R^2\ni(\Re z_1,\Re z_2)\mapsto \{V_{\mathbb T}^*((z_1,z_2,z_3^{(1)}(z_1,z_2)),\mathcal T),V_{\mathbb T}^*((z_1,z_2,z_3^{(2)}(z_1,z_2)),\mathcal T)\}$ (below) exhibit a nice smoothness away from $\mathbb T.$ }
\begin{center}
\label{Figuresmoothness}
\includegraphics[scale=0.36]{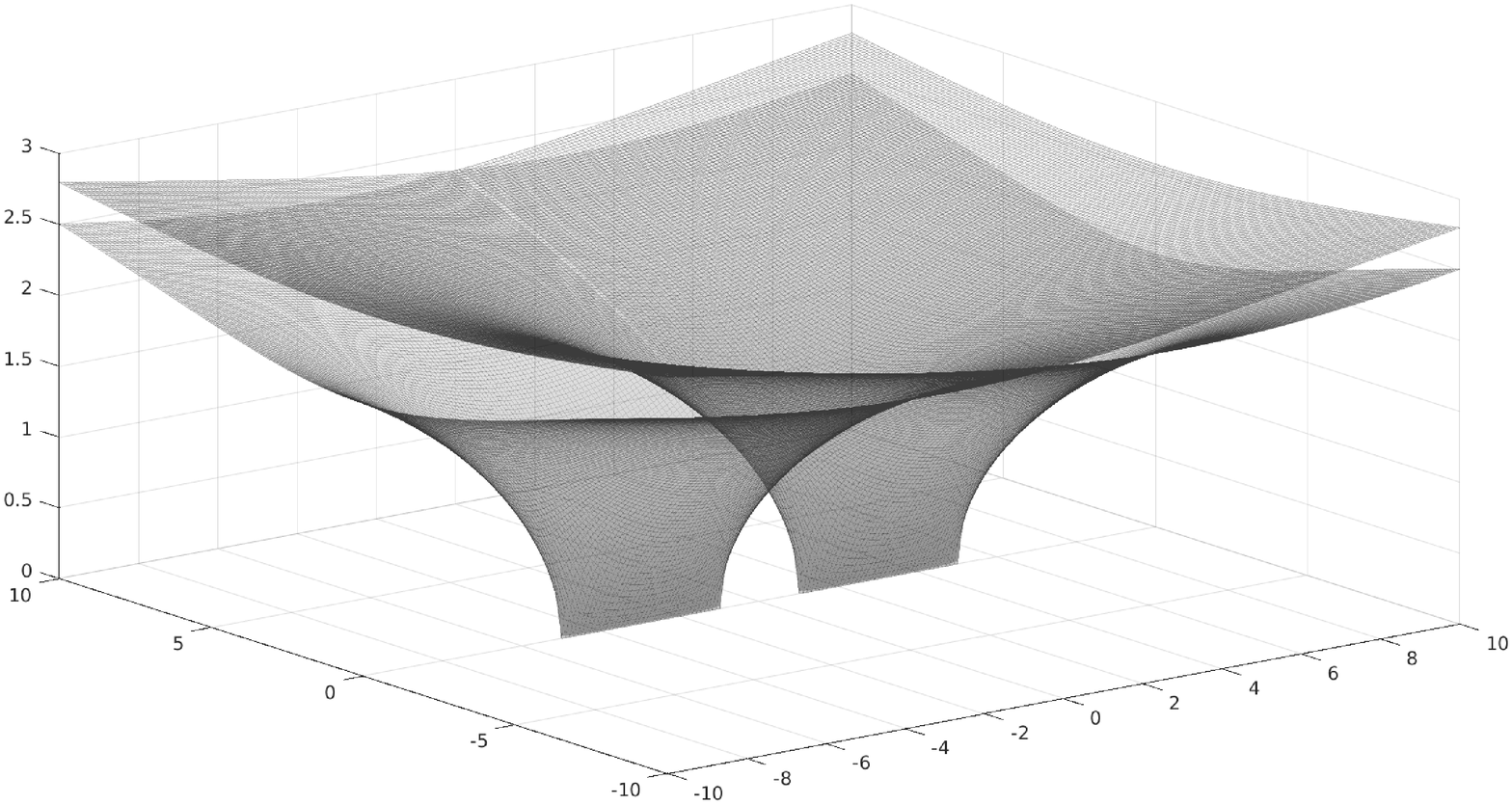}\\
\includegraphics[scale=0.36]{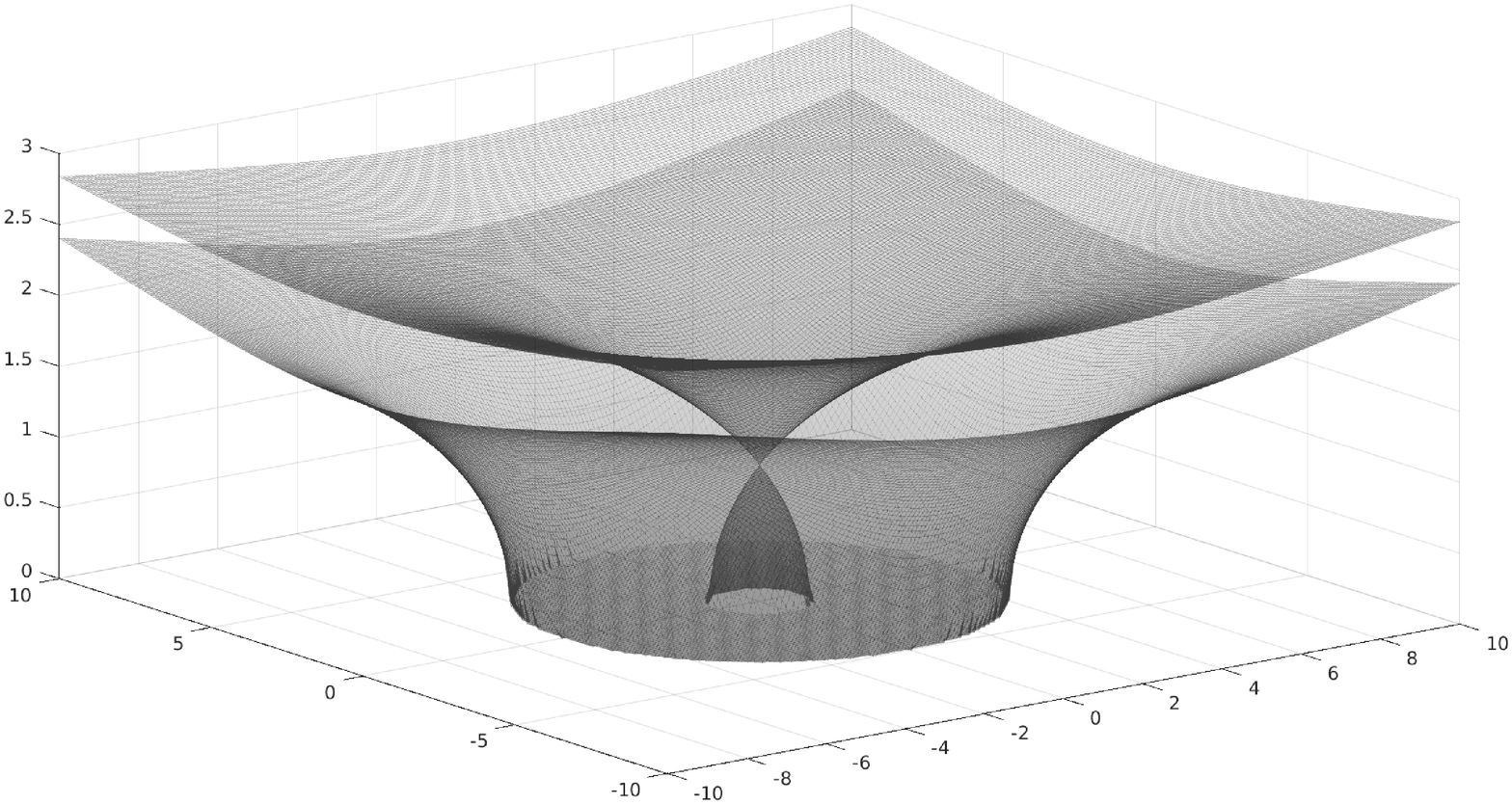}
\end{center}
\end{figure}

\section{Proof of Theorem \ref{mainresult}}
We divide the proof in some steps since they might have their own interest.
Let $\Psi:\C^3\rightarrow \C^3$ be defined by
$$\Psi(z):=\left(\begin{array}{c}\frac{z_1^2+z_2^2+z_3^2+R^2-r^2}{2R}\\z_1^2+z_2^2\\z_2^2  \end{array}\right)=:\left(\begin{array}{c}w_1\\w_2\\w_3  \end{array} \right).$$
This can be used as a polynomial change of coordinates, indeed we have $\Psi(\mathcal T)=\mathcal C,$ where $\mathcal C$ is the parabolic cylinder
$$\mathcal C:=\{w\in \C^3: w_1^2=w_2  \}.$$
Indeed, by elementary algebraic manipulation, we can derive by \eqref{torusdef} the equivalent representation of $\mathcal T$ as 
$$\mathcal T=\left\{z\in \C^3: \left( \frac{z_1^2+z_2^2+z_3^2+R^2-r^2}{2R}\right)^2=z_1^2+z_2^2\right\}. $$
Notice also that $\Psi$ has $8$ (possibly coinciding) inverses determined by 
$$\Psi^{\leftarrow}(w)=\left\{ \left(\begin{array}{c}\alpha_1\sqrt{w_2-w_3}\\\alpha_2\sqrt{w_3}\\ \alpha_3\sqrt{2R w_1-w_2+r^2-R^2}\end{array}\right): \alpha\in \{-1,1\}^3\right\}.$$  
\begin{lemma}
In the above notation we have
\begin{equation}\label{VTVP}
V_{\mathbb T}^*(z,\mathcal T)=\frac{1}{2}V_E^*(\Psi(z),\mathcal C),
\end{equation}
where
$$E:=\{w\in \mathcal C\cap \R^3: 0\leq w_3\leq w_1^2,\;R-r\leq w_1\leq R+r\}.$$
In particular, the function $V_{\mathbb T}^*(\cdot,\mathcal T)$ is constant on $\Psi^{-1}(w)$ for any $w\in \mathcal C.$ 
\end{lemma}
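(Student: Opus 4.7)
The plan is to view $\Psi$ as a degree-$2$ polynomial map that realizes $\mathcal T=\Psi^{-1}(\mathcal C)$ as an $8$-fold branched cover of $\mathcal C$, with $\mathbb T=\Psi^{-1}(E)$; the factor $\tfrac12$ in \eqref{VTVP} is a reflection of the fact that $\Psi$ doubles degrees. As a preliminary, I would verify the two set-theoretic identities. The identity $\mathcal T=\Psi^{-1}(\mathcal C)$ is immediate from \eqref{torusdef2}. For $\mathbb T=\Psi^{-1}(E)$, the slickest check uses the parametrization $z_1=(R+r\cos\phi)\cos\theta$, $z_2=(R+r\cos\phi)\sin\theta$, $z_3=r\sin\phi$ of $\mathbb T$: a short computation gives $\Psi(z)=(R+r\cos\phi,\,(R+r\cos\phi)^2,\,(R+r\cos\phi)^2\sin^2\theta)\in E$; conversely, for $w\in E$ the three radicands $w_3$, $w_2-w_3$, $2Rw_1-w_2+r^2-R^2=r^2-(w_1-R)^2$ appearing in the formula for $\Psi^{\leftarrow}$ are nonnegative, so all eight preimages are real and therefore lie in $\mathbb T$.

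For the inequality $\tfrac12 V_E(\Psi(z),\mathcal C)\le V_{\mathbb T}(z,\mathcal T)$ I would pull back: given $u\in\mathcal L(\mathcal C)$ with $u\le 0$ on $E$, set $v:=\tfrac12\,u\circ\Psi$. Since each component of $\Psi$ has total degree $2$, the estimate $\max_j\log|\Psi(z)_j|\le 2\max_j\log|z_j|+O(1)$ yields $v\in\mathcal L(\mathcal T)$; plurisubharmonicity on $\mathcal T_{\text{reg}}$ is automatic from the holomorphy of $\Psi$, and $v\le 0$ on $\mathbb T=\Psi^{-1}(E)$. Taking the supremum over $u$ gives the inequality.

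For the reverse inequality I would push forward: given $u\in\mathcal L(\mathcal T)$ with $u|_{\mathbb T}\le 0$, define
\[
v(w):=2\,\max\{u(z):z\in\Psi^{-1}(w)\},\qquad w\in\mathcal C.
\]
The inclusion $\Psi^{-1}(E)\subseteq\mathbb T$ gives $v\le 0$ on $E$; the bound $\max_j|z_j|^2\le C\max_j|w_j|$ for $z\in\Psi^{-1}(w)$, read directly off the explicit formula for $\Psi^{\leftarrow}$, together with the logarithmic growth of $u$, yields $v\in\mathcal L(\mathcal C)$. The genuine obstacle I anticipate is showing that $v$ is plurisubharmonic on all of $\mathcal C$. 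Away from the branch locus $B=\{w_3(w_2-w_3)(2Rw_1-w_2+r^2-R^2)=0\}$, the covering $\Psi|_{\mathcal T}\to\mathcal C$ admits eight local holomorphic sections $z^{(j)}(w)$ and $v=2\max_j u\circ z^{(j)}$ is plurisubharmonic as the maximum of finitely many psh functions; across $B$, which is a proper analytic subset of $\mathcal C$, $v$ remains upper semicontinuous and locally bounded above, hence extends uniquely to a psh function on $\mathcal C$ by the classical removable-singularity theorem for plurisubharmonic functions. Consequently $v\le V_E(\cdot,\mathcal C)$, and evaluating at $\Psi(z)$ produces $2u(z)\le V_E(\Psi(z),\mathcal C)$; taking the supremum over $u$ completes the equality $V_{\mathbb T}(z,\mathcal T)=\tfrac12 V_E(\Psi(z),\mathcal C)$. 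Since $\Psi$ is a continuous, open, finite map, upper-semicontinuous regularization commutes with composition by $\Psi$, yielding \eqref{VTVP}; the statement that $V_{\mathbb T}^*(\cdot,\mathcal T)$ is constant on the fibres of $\Psi$ is then immediate from the formula.
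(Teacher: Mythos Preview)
Your argument is correct and essentially self-contained, but it takes a different route from the paper. The paper's proof is a two-line citation: it observes that $\Psi$ is a polynomial map whose three components all have total degree $2$ and whose leading homogeneous part $\hat\Psi(z)=\big((z_1^2+z_2^2+z_3^2)/2R,\;z_1^2+z_2^2,\;z_2^2\big)$ satisfies the \emph{Klimek condition} $\hat\Psi^{-1}(0)=\{0\}$. Together with $\Psi(\mathbb T)=E$ and $\Psi^{-1}(E)=\mathbb T$, this puts us exactly in the ``equality case'' of Klimek's transformation theorem \cite[Th.~5.3.1]{Kl91}, which immediately yields $V_{\mathbb T}^*(z,\mathcal T)=\tfrac12 V_E^*(\Psi(z),\mathcal C)$. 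A remark then explains why the Euclidean statement of that theorem applies in the variety setting, via Sadullaev's characterization (Theorem \ref{SadResult}).

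What you do instead is reprove that instance of Klimek's theorem by hand: pull back competitors in $\mathcal L(\mathcal C)$ for one inequality, and push forward competitors in $\mathcal L(\mathcal T)$ via the fibrewise maximum for the other. Your growth bound $\max_j|z_j|^2\le C\max_j|w_j|$ is exactly the content of the Klimek condition in disguise (it is what $\hat\Psi^{-1}(0)=\{0\}$ buys), so the two proofs are mathematically equivalent. The paper's approach is much shorter and places the lemma in the context of a general transformation principle; your approach is longer but has the advantage of being self-contained and of working directly with the Zeriahi/Lelong-class definition on the varieties, which sidesteps the need for the paper's remark about transferring the Euclidean theorem to the algebraic setting. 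One minor point: your final sentence about regularizations commuting with $\Psi$ because $\Psi$ is ``continuous, open, finite'' is a bit quick---strictly speaking one should also note that the limsup defining $V_{\mathbb T}^*$ is taken only through $\mathcal T_{\mathrm{reg}}$, and check that $\Psi(\mathcal T_{\mathrm{sing}})$ is pluripolar in $\mathcal C$ (or simply observe a posteriori that both sides are continuous).
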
 
\begin{proof}
The mapping $\Psi$ is a polynomial mapping having the same (total) degree in each component and whose homogeneous part
$$\hat\Psi(z):=\left(\begin{array}{c}\frac{z_1^2+z_2^2+z_3^2}{2R}\\z_1^2+z_2^2\\z_2^2  \end{array}\right)$$
clearly satisfies the \emph{Klimek condition}
$$\hat \Psi^{\leftarrow}(0)=0.$$
Moreover it is straightforward to check that $\Psi(\mathbb T)=E$ and $\Psi^{\leftarrow}(E)=\mathbb T.$

Therefore we can apply \cite[Th. 5.3.1]{Kl91} in its "equality case" to get \eqref{VTVP}. 
\end{proof}
\begin{remark}
Note that \cite[Th. 5.3.1]{Kl91} is not formulated in terms of varieties or complex manifolds, it holds for honest extremal functions in Euclidean space. Nevertheless we can apply it. Indeed the statement of the theorem does not require the compact set of which we are computing the extremal function to be non pluripolar and, due to the deep result of Sadullaev (see Prop. \ref{SadResult} and \cite{Sa82}), we can recover the extremal function of a compact subset of an algebraic variety by using \emph{global} polynomials.
\end{remark}

\begin{proposition}
Let $\pi:\mathcal C\rightarrow \C^2,$ $\pi(w):=(w_1,w_3).$ Then we have
\begin{equation}\label{VPVS}
V_E^*(w,\mathcal C)=V_{\pi(E),\Sigma_{2,1}}(\pi(w)),\;\;\forall w\in \mathcal C,
\end{equation}
where $\Sigma_{2,1}:=\{y\in \R_+^2: y_1/2+y_2\leq 1\}.$
\end{proposition}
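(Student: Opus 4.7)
The plan is to transport the extremal problem on $\mathcal C\subset\C^3$ to one on $\C^2$ via the natural parametrization, exploiting that $\mathcal C=\{w_2=w_1^2\}$ is the graph of a quadratic. The projection $\pi(w_1,w_2,w_3)=(w_1,w_3)$ restricts to a polynomial biholomorphism $\mathcal C\to\C^2$ with inverse $\pi^{-1}(u,v)=(u,u^2,v)$, and $\pi|_E$ is a bijection onto $\pi(E)$. Through this map I would rewrite Sadullaev's Siciak-type definition of $V_E(\cdot,\mathcal C)$ as a $P$-Siciak extremal function on $\C^2$ with $P=\Sigma_{2,1}$.

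The main step is the identification of polynomial classes: the restriction map $\Phi:\wp^k(\C^3)\to\poly(k\Sigma_{2,1})$, $p\mapsto p\circ\pi^{-1}$, is a surjection at each $k$. A monomial $w_1^{\alpha_1}w_2^{\alpha_2}w_3^{\alpha_3}$ with $\alpha_1+\alpha_2+\alpha_3\leq k$ pulls back to $u^{\alpha_1+2\alpha_2}v^{\alpha_3}$; setting $(a,c):=(\alpha_1+2\alpha_2,\alpha_3)$, one has $a/2+c\leq\alpha_1+\alpha_2+\alpha_3\leq k$, so $(a,c)\in k\Sigma_{2,1}$. Conversely, every $(a,c)\in k\Sigma_{2,1}\cap\N^2$ lifts via $\alpha_2=\lfloor a/2\rfloor$, $\alpha_1=a-2\alpha_2\in\{0,1\}$, $\alpha_3=c$; the needed parity check is that odd $a$ with $a/2+c\leq k$ actually forces $(a+1)/2+c\leq k$, because $2k-2c$ is even and hence odd $a$ cannot saturate $a\leq 2k-2c$. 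Coupled with the obvious identities $\|p\|_E=\|p\circ\pi^{-1}\|_{\pi(E)}$ and $p(w)=(p\circ\pi^{-1})(\pi(w))$ for $w\in\mathcal C$, this yields, at every $k$,
$$\sup\{|p(w)|:p\in\wp^k(\C^3),\,\|p\|_E\leq 1\}=\sup\{|q(\pi(w))|:q\in\poly(k\Sigma_{2,1}),\,\|q\|_{\pi(E)}\leq 1\}.$$
Taking $k$-th roots, passing to the limit, and applying $\log^+$ then gives \eqref{VPVS}.

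The main obstacle, although essentially combinatorial, is this degree-preserving lifting; the specific shape of the Newton polytope $\Sigma_{2,1}$ is dictated precisely by the quadratic relation $w_2=w_1^2$. As a secondary consistency check I would verify the same correspondence for the Zeriahi-Lelong-class formulation \eqref{Zeriahi}: the logarithmic support function $H_{\Sigma_{2,1}}(u,v)=\max(2\log|u|,\log|v|,0)$ coincides asymptotically with $\max_i\log|w_i|$ on $\mathcal C$ (since $|w_2|=|w_1|^2$ dominates $|w_1|$ for large $|w_1|$), so that $\mathcal L(\mathcal C)\circ\pi^{-1}=\mathcal L_{\Sigma_{2,1}}(\C^2)$ and the two $P$-Lelong classes match, providing an independent route to \eqref{VPVS} via Zeriahi's theorem.
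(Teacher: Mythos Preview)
Your argument is correct. The paper, however, takes the reverse route: it works directly with the Zaharjuta/Lelong-class description (your ``secondary consistency check'') rather than the Siciak polynomial one. Concretely, the paper observes that $(w_2,w_3)$ are Rudin coordinates for $\mathcal C$, so the Lelong class $\mathcal L(\mathcal C)$ can be rewritten as
\[
\{u\in\psh(\mathcal C):\ u-\max\{2\log|w_1|,\log|w_3|\}\text{ bounded above at infinity}\},
\]
and since $\pi:\mathcal C\to\C^2$ is a biholomorphism with $\pi(E)$ as image of $E$, the map $u\mapsto u\circ\pi^{-1}$ is a bijection between the two defining classes of competitors, hence the two upper envelopes agree. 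This is a two-line argument once one invokes Rudin coordinates.

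Your primary route via the Siciak definition is more explicit and has the merit of making the Newton polytope $\Sigma_{2,1}$ emerge naturally from the combinatorics of restricting $\wp^k(\C^3)$ to the graph $w_2=w_1^2$; the parity lemma you isolate (odd $a$ cannot saturate $a\le 2k-2c$) is exactly what guarantees surjectivity onto $\poly(k\Sigma_{2,1})$. The paper's route is shorter but hides this combinatorics behind the equivalence of the Siciak and Zaharjuta pictures. Either approach suffices; together they explain both \emph{why} the identity holds and \emph{why} $\Sigma_{2,1}$ is the correct polytope.
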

\begin{proof}
By definition we have $V_E(w,\mathcal C)=\sup\{u(w),u\in \mathcal E\},$ where 
$$\mathcal E:=\{u\in \mathcal L(\mathcal C), u|_E\leq 0\},$$
while $V_{\pi(E),\Sigma_{2,1}}(\eta)=\sup\{v(\eta),u\in \mathcal F\},$ where
$$\mathcal F:=\{v\in \psh(\C^2),v-\max\{2\log|w_1|,\log|w_3|\}\text{ bounded above as }|w|\to \infty, v|_{\pi E}\leq 0\}.$$
Here we used that the logarithmic support function (cf. \eqref{logaritmicsupport}) for $\Sigma_{2,1}$ is $H_{\Sigma_{2,1}}(z):=\max\{2\log|\eta_1|,\log|\eta_2| \}.$

We use the holomorphic coordinates $\z:=(w_2,w_3)$ for $\mathcal C.$ These coordinates are a set of so called Rudin coordinates \cite{Ru68}, i.e., $\mathcal C\subset\{w\in \C^3: |w_1|\leq C (1+|\z|)\}$ for some constant $C.$ This choice allows us to re-define the Lelong class $\mathcal L(\mathcal C)$ as
\begin{align*}
\mathcal L(\mathcal C)=&\{u\in \psh(\mathcal C): u-\max\{\log|w_2|,\log|w_3|\}\text{ bounded above as }|w|\to \infty\}\\
=&\{u\in \psh(\mathcal C): u-\max\{2\log|w_1|,\log|w_3|\}\text{ bounded above as }|w|\to \infty\}.
\end{align*}

Also note that the coordinate projection $\pi:\mathcal C\rightarrow \C^2$, $\pi(w):=(w_1,w_3)$ is one to one. It follows that, if $u\in \mathcal E$, then $u\circ \pi^{-1}\in \mathcal F$ and, if $v\in \mathcal F$, then $v\circ \pi\in \mathcal E.$ Therefore the two upper envelopes coincide when composed with the coordinate projection map.
\end{proof}


\begin{proposition}\label{piEK}
Let 
$$K:=\{x\in \R^2:-1\leq x_1\leq 1,\;-(r x_1+R)\leq x_2\leq rx_1+R\},$$
then we have
\begin{equation}
V_{\pi E,\Sigma_{2,1}}(t)=2V_K(\Phi(t)), \forall t\in \C^2,
\end{equation}
where
$$\Phi\left(\begin{array}{c} t_1\\t_2  \end{array}\right)=\left(\begin{array}{c} \frac{t_1-R}r\\\sqrt{t_2}  \end{array}\right)$$
and, by symmetry, we can choose \emph{any} branch of the square root.
\end{proposition}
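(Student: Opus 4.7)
The plan is to pass everything through the proper polynomial holomorphic map
\[
\sigma:\C^2\freccia\C^2,\qquad \sigma(s):=\bigl(rs_1+R,\,s_2^2\bigr),
\]
which is a surjective degree-two branched cover whose two local inverses are precisely the two branches of $\Phi$. Three quick bookkeeping facts set the stage: (i) a direct description of $\pi E=\{(t_1,t_2)\in\R^2:R-r\leq t_1\leq R+r,\;0\leq t_2\leq t_1^2\}$ gives both $\sigma(K)=\pi E$ and $\sigma^{-1}(\pi E)=K$; (ii) the compact set $K$ is invariant under $(s_1,s_2)\mapsto(s_1,-s_2)$; and (iii) the logarithmic support function of the weight body satisfies $H_{\Sigma_{2,1}}(\sigma(s))=\max\{2\log|rs_1+R|,2\log|s_2|\}$, which, halved, agrees with the standard Lelong weight $\max\{\log|s_1|,\log|s_2|\}$ up to a bounded error as $|s|\to\infty$. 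With these in hand I would prove the claimed equality by two matching inequalities from the Zaharjuta envelope definitions.

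For $V_{\pi E,\Sigma_{2,1}}(t)\leq 2V_K(\Phi(t))$ I take any $u\in\mathcal L_{\Sigma_{2,1}}(\C^2)$ with $u|_{\pi E}\leq 0$ and set $\tilde u(s):=\tfrac12\, u(\sigma(s))$. Plurisubharmonicity is immediate from pullback by a holomorphic map; membership $\tilde u\in\mathcal L(\C^2)$ is the content of (iii); and $\tilde u|_K\leq 0$ is the first half of (i). Hence $\tilde u\leq V_K$, and surjectivity of $\sigma$ turns this into $u(t)\leq 2V_K(\Phi(t))$ for every $t\in\C^2$, giving the desired inequality after taking the supremum over $u$. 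For the reverse inequality I take $v\in\mathcal L(\C^2)$ with $v|_K\leq 0$, symmetrize it via $\tilde v(s_1,s_2):=\max\{v(s_1,s_2),v(s_1,-s_2)\}$ (still in $\mathcal L(\C^2)$ with $\tilde v|_K\leq 0$ by (ii)), and push it down through $\sigma$ to the function $w$ defined by $w\circ\sigma=\tilde v$. Once I know $w$ is plurisubharmonic on $\C^2$, estimate (iii) yields $2w\in\mathcal L_{\Sigma_{2,1}}(\C^2)$ and the second half of (i) yields $2w|_{\pi E}\leq 0$; comparing with $V_{\pi E,\Sigma_{2,1}}$ then gives $2v(s)\leq 2\tilde v(s)=2w(\sigma(s))\leq V_{\pi E,\Sigma_{2,1}}(\sigma(s))$, which is the matching inequality after taking the supremum over $v$.

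The only non-routine step is the plurisubharmonicity of the descended function $w$ across the branch locus $\{t_2=0\}$. Off this locus $\sigma$ is a local biholomorphism, so $w$ is PSH there by direct transport. To cross the branch locus I would invoke the standard removable-singularity theorem for PSH functions (see e.g.\ \cite{Kl91}): a function that is PSH on the complement of an analytic hypersurface and locally bounded above extends to a PSH function on the ambient space. Upper semicontinuity of $w$ at each branch point is automatic because the $\sigma$-fibre over a point of $\{t_2=0\}$ is a single point and $\tilde v$ is already upper semicontinuous, so no further regularization is needed. This extension across the branch locus is the one conceptual obstacle; the rest of the argument is straightforward envelope bookkeeping.
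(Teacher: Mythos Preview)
Your argument is correct and takes a genuinely different route from the paper's. The paper works on the $t$-side: it shows directly that $v:=2V_K^*\circ\Phi$ lies in $\mathcal L_{\Sigma_{2,1}}^+(\C^2)$, vanishes on $\pi E$, and has $(\ddc v)^2$ supported in $\pi E$ (using the maximality of $V_K^*$ outside $K$ and the fact that the branch set $N$ is pluripolar); it then invokes the $P$-Global Domination Principle (Proposition~\ref{Pdomination}) to force $V_{\pi E,\Sigma_{2,1}}^*\leq v$, while the reverse inequality comes from $v$ being a competitor in the envelope. By contrast, you set up a two-sided competitor transfer along the branched cover $\sigma$: pulling back $\mathcal L_{\Sigma_{2,1}}$-competitors to $\mathcal L$-competitors for the inequality $\leq$, and symmetrizing and pushing down $\mathcal L$-competitors for $\geq$. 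This is more elementary and entirely self-contained: it avoids the Monge--Amp\`ere support computation and the domination principle from \cite{LePe18}, needing only the Zaharjuta envelope definitions and the removable-singularity theorem for PSH functions across the hypersurface $\{t_2=0\}$. The paper's approach, on the other hand, showcases the $P$-pluripotential machinery and makes the role of maximality transparent. One small remark: in your final step you should note that the PSH extension furnished by the removable-singularity theorem actually coincides with your $w$ (not merely that some extension exists); this follows because $w$ is already upper semicontinuous and two PSH functions agreeing off a pluripolar set agree everywhere, but it is worth saying explicitly.
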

\begin{proof}
Let $v(t):=2 V_K^*(\Phi(t)).$ This is a plurisubharmonic function on $\C^2\setminus \{t\in \C^2:t_2\in \R, t_2<0\}$ because it is the composition of a plurisubharmonic function with an holomorphic map. Note that $N:=\{t\in \C^2:t_2\in \R, t_2\leq0\}$ is a pluripolar set in $\C^2.$

Due to the symmetry of $V_K^*(t_1,t_2)=V_K^*(t_1,-t_2)$ (this is a byproduct of the proof of Theorem \ref{mainresult} below), the uppersemicontinuous extension to $\C^2$ of $v|_{\C^2\setminus N}$ is a continuous plurisubharmonic function, that indeed coincides with $v.$ Hence $v$ is plurisubharmonic on $\C^2.$

It is a classical result (see for instance \cite{Kl91}) that $V_K^*\in \mathcal L^+(\C^2).$ More precisely, there exist $C\in \R$ such that, for $|w|$ large enough, we have
\begin{equation}
|V_K^*(w)-\max\{\log|w_1|,\log|w_2|\}|<C.
\end{equation}
It follows that
$$|2V_K(\Phi(t))-2\max\{\log|\Phi_1(t)|,\log|\Phi_2(t)|\}|\sim|2V_K(\Phi(t))-\max\{2\log|t_1|,\log|t_2|\}|<C,$$
since $v\in \psh(\C^2)$ we get $v\in \mathcal L_P^+(\C^2).$

Now note that $v=0$ on $\pi E.$ This follows by the fact that $\Phi(\pi E)\subseteq K$ and $V_K^*=0$ on $K.$ This last statement will clarified in the "proof of Theorem \ref{mainresult}" below, where the function $V_K^*$ is computed.  

Now we notice that $(\ddc v)^2$ is zero on $(\C^2\setminus N)\setminus \pi E.$ This follows by the fact that $\Phi$ is holomorphic and on $\C^2\setminus N$ and $v=2 V_K^*\circ \Phi$, where $V_K^*$ is maximal on $\C^2\setminus K\supset \C^2\setminus \Phi(\pi E).$ Since $N$ is pluripolar and $v$ is locally bounded we have $(\ddc v)^2=0$ on $\C^2\setminus \pi E$, \cite{BeTa82}. Hence $\support (\ddc v)^2\subseteq \pi E.$

We can apply Proposition \ref{Pdomination} with $u(t):=V_{\pi E,\Sigma_{2,1}}^*(t)$ which is by definition a function in $ \mathcal L_P(\C^2)$ q.e. vanishing on $\pi E.$ Indeed, since the Monge Ampere of locally bounded $\psh$ function does not charge pluripolar sets \cite{BeTa82}, we have $u\leq v$ almost everywhere with respect to $(\ddc v)^2$.  We conclude that
$$V_{\pi E,\Sigma_{2,1}}^*(t)\leq 2V_K^*(\Phi(t)),\;\;\forall t\in \C^2.$$
But, since the left end side is (q.e.) defined by an upper envelope containing the right hand side, equality must hold.
\end{proof}

\begin{proof}[End of the proof of Theorem \ref{mainresult}]
In order to conclude the proof, we are left to compute the extremal function of the trapezoid $K.$ Note that this is a convex real body but it is not centrally symmetric, hence the Baran formula (see eq. \eqref{Baran}) is not applicable for this case.

Instead we need to use another technique that has been suggested by Sione Ma`u.

We want to show that
\begin{align}\label{sioneclaim}
&V_K^*(\z)=\max\{V_{K_1}^*(\z_1),V_{K_2}^*(\z)\}\;\forall \z\in \C^2,\;\text{ where}\\
&K_1:=\{z\in \R: |z|\leq 1\},\;\;\;\;\;\;K_2:=\{\z\in \R^2: -R/r\leq z_1\leq 1,-rz_1-R\leq z_2\leq rz_1+R\}.
\end{align}
Let us denote the function appearing in the right hand side of \eqref{sioneclaim} by $v.$ Notice that $v$ is a good candidate for $V_K^*$, indeed it is fairly clear that $v\in\psh(\C^2)$ because it is the maximum of two plurisubharmonic functions on $\C^2$, in the same way $v\in \mathcal L^+(\C^2).$ We prove that the functions indeed coincide using the \emph{extremal ellipses technique}.

Let us pick $z\in \C^2\setminus K.$ Since $K$ is a real convex body, there exists at least one leaf $\mathcal E_z\ni z$ of the Monge Ampere foliation relative to $K.$ We recall \cite{BuLeMaRe10,BuLeMa09,BuLeMa15} that $\C^2$ is foliated by a set of complex analytic curves $\{\mathcal E_\alpha\}$ such that $V_K^*|_{\mathcal E_\alpha}$ is a subharmonic function that coincides with $V_{\mathcal E_\alpha\cap K}(\cdot,\mathcal E_\alpha).$ The curves $\mathcal E_\alpha$ are \emph{extremal ellipses}, i.e.,  (possibly degenerate) complex ellipses whose area is maximal among all ellipses of given direction and eccentricity that are inscribed in $K$.
We need the following lemma.
\begin{lemma}\label{SioneLemma}
If $\mathcal E$ is an extremal ellipse for $K$, then at least one of the following holds true
\begin{enumerate}[(i)]
\item $\mathcal E$ is an extremal ellipse for $K_2$ or
\item $\mathcal E\ni \z\mapsto V_{K_1}^*(\z_1)$ is harmonic on $\mathcal E\setminus \C\times K_1$ and the orientation of $\mathcal E$ is not in the direction $\z_2.$
\end{enumerate}
\end{lemma}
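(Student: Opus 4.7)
The approach is to split by the orientation of $\mathcal E$ relative to the coordinate axes, relying crucially on the geometric identity $K = K_2 \cap (K_1 \times \R)$: the right vertical edge $x_1 = 1$ and the two slanted edges of the trapezoid $K$ are already edges of $K_2$, while the only extra constraint defining $K$ inside $K_2$ is the vertical line $x_1 = -1$. In particular $K \subset K_2$, so every real ellipse inscribed in $K$ is \emph{a fortiori} inscribed in $K_2$.

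The first case I would treat is the one in which the orientation of $\mathcal E$ is in the direction $\zeta_2$. Then the holomorphic projection $\pi_1|_{\mathcal E}$ is constant, say $\pi_1(\mathcal E) = \{a\}$, and $\mathcal E$ is a complex disk lying in the line $\{\zeta_1 = a\}\cong \C$. For $\mathcal E$ to be inscribed in $K$ one needs $a \in [-1,1]$; and since $[-1,1] \subset [-R/r,1]$,
\[
K \cap \{\zeta_1 = a\} \;=\; K_2 \cap \{\zeta_1 = a\} \;=\; \{a\} \times [-(ra + R),\, ra + R].
\]
Using the characterization $V_K^*|_{\mathcal E} = V_{\mathcal E \cap K}(\cdot, \mathcal E)$ of extremal ellipses for a real convex body, these two identical real traces force $\mathcal E$ to be an extremal ellipse for $K_2$ as well, giving case~(i).

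In the complementary case, the orientation of $\mathcal E$ is \emph{not} in the direction $\zeta_2$, so $\pi_1|_{\mathcal E}\colon \mathcal E \to \C$ is a nonconstant holomorphic map on the $\C$-parametrized curve $\mathcal E$. Recalling that $V_{K_1}^*(w) = \log|h(w)|$ is harmonic on $\C \setminus K_1$, the composition $V_{K_1}^* \circ \pi_1|_{\mathcal E}$ is subharmonic on $\mathcal E$ and harmonic on $(\pi_1|_{\mathcal E})^{-1}(\C \setminus K_1) = \mathcal E \setminus (K_1 \times \C)$, which is case~(ii). (The display $\C \times K_1$ in the statement appears to be a typo for $K_1 \times \C$.)

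The principal point I would still need to pin down carefully is the equivalence ``orientation of $\mathcal E$ in the direction $\zeta_2$'' $\Longleftrightarrow$ ``$\pi_1|_{\mathcal E}$ constant''. Unpacking the Burns--Levenberg--Ma`u parametrization $\zeta(\lambda) = c + b\lambda + \bar b\,\lambda^{-1}$ of an extremal complex ellipse of a real convex body \cite{BuLeMa09,BuLeMaRe10}, the ``orientation'' is controlled by the vector $b\in \C^2$: $\pi_1|_{\mathcal E}$ being constant is tantamount to the first component of $b$ vanishing, which is precisely the statement that the real ellipse $\mathcal E \cap \R^2$ is aligned with the $\zeta_2$-axis. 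Some additional bookkeeping would handle the degenerate case in which $\mathcal E$ collapses to a real line segment, but this case fits the same dichotomy, so I expect no new obstacle there.
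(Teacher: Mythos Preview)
Your argument is correct for the lemma \emph{as literally stated}, and it is considerably shorter than the paper's. You observe that condition~(ii) is essentially automatic: whenever the ellipse is not vertical, $\pi_1|_{\mathcal E}$ is a nonconstant holomorphic map, so $V_{K_1}^*\circ\pi_1$ is harmonic off $\pi_1^{-1}(K_1)=\mathcal E\cap(K_1\times\C)$. The only case left is a vertical extremal segment, and since the vertical slices of $K$ and $K_2$ coincide for $\zeta_1\in[-1,1]$, extremality transfers to $K_2$, giving~(i). This is a clean dichotomy and it does prove the lemma.

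The paper proceeds very differently: it classifies the possible contact configurations $\mathcal E\cap\partial K$ of an inscribed extremal ellipse, ruling out several patterns by a ``translate right, then dilate'' argument, and sorts the survivors into those tangent to three sides of the triangle $K_2$ (hence extremal for $K_2$) and those touching \emph{both} vertical edges $\zeta_1=\pm 1$ of $K$.

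The point of this longer route is that it proves more than the lemma asserts: in the cases assigned to~(ii), the paper's analysis yields the extra geometric fact $\pi_1(E)=K_1$. That extra fact is what actually makes the subsequent step in the proof of Theorem~\ref{mainresult} work, namely the equality
\[
V_K^*(\zeta)\big|_{\mathcal E}=V^*_{\mathcal E\cap(K_1\times\C)}(\zeta,\mathcal E)
\]
in \eqref{othersideeq}. Indeed, $\mathcal E\cap K=\partial E$ while $\mathcal E\cap(K_1\times\C)=\{\lambda:\zeta_1(\lambda)\in[-1,1]\}$; these two sets coincide precisely when the $\zeta_1$-projection of the real ellipse is all of $[-1,1]$, and are strictly nested otherwise, in which case the two extremal functions on $\mathcal E$ differ. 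Your bare verification of~(ii) does not supply this, so if you plug your proof of the lemma into the paper's argument, the passage to \eqref{othersideeq} would be unjustified. In short: your proof of the stated lemma is valid and elegant, but the paper's contact-point proof is doing hidden work that the downstream argument relies on.
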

We postpone the proof of this claim and we show first why this implies that $V_K^*\equiv v.$

Assume \emph{(i)}. Then we have
\begin{equation}\label{onesideeq}
V_K^*(\z)|_{\mathcal E}=V^*_{\mathcal E\cap K}(\z,\mathcal E)=V_{K_2}^*(\z)|_{\mathcal E}\;\;\forall \z\in \mathcal E.
\end{equation}
Here the first equality is due to the fact that $\mathcal E$ is extremal for $K$ and the second  to the fact that it is extremal for $K_2.$

Assume now  \emph{(ii)}  We define the function $u:\mathcal E\rightarrow [-\infty,+\infty[$ by setting $u(\z):=V_{K_1}^*(\z_1).$ Notice that $u\in \mathcal L^+(\mathcal E),$ $u\leq 0$ on $\mathcal E\setminus (K_1\times \C)$ and $\ddc u=0$ on $\mathcal E\setminus (K_1\times \C)$ (i.e., the support of $\ddc u$ is in $\mathcal E\cap (K_1\times \C).$ Therefore, by the Global Domination Principle on the smooth algebraic variety $\mathcal E$, we have $u(\z)=V^*_{\mathcal E\cap (K_1\times \C)}(\cdot,\mathcal E).$ Then it follows that
\begin{equation}\label{othersideeq}
V_K^*(\z)|_{\mathcal E}=V^*_{\mathcal E\cap (K_1\times \C)}(\z,\mathcal E)=u(\z)=V_{K_1}^*(\z_1),\;\;\forall \z\in \mathcal E.
\end{equation}
For any (possibly degenerate) extremal ellipse $\mathcal E$ for $K$ we denote by $E$ the (possibly degenerate) \emph{filled-in} real ellipse relative to $\mathcal E.$ Note that $V_K^*(\cdot)|_{\mathcal E}=V_E^*|_{\mathcal E}=V_{\mathcal E\cap K}^*(\cdot,\mathcal E)$ since $\mathcal E$ is trivially an extremal ellipse for $E.$
 
If we assume \emph{(i)}, then the monotonicity of extremal functions with respect to the set inclusion implies
\begin{equation}\label{byinclusion1}
V_E^*(\z)\geq V_{K_1}^*(\z_1),\;\;\forall \z\in \C^2,
\end{equation}
while if we assume \emph{(ii)} the monotonicity property implies
\begin{equation}\label{byinclusion2}
V_E^*(\z)\geq V_{K_2}^*(\z),\;\;\forall \z\in \C^2.
\end{equation}
Finally, assumption \emph{(i)} implies \eqref{onesideeq} and \eqref{byinclusion1}, thus we have
\begin{equation*}
V_K^*(\z)=V_{K_2}^*(\z)\geq V_{K_1}(\z_1),\;\;\forall \z\in \mathcal E,
\end{equation*}
and assumption \emph{(ii)} implies \eqref{othersideeq} and \eqref{byinclusion2}, thus we have
\begin{equation*}
V_K^*(\z)=V_{K_1\times \C}^*(\z)\geq V_{K_2},\;\;\forall \z\in \mathcal E.
\end{equation*}
Therefore, on any extremal ellipse $V_K^*$ coincides with $v,$ but, since extremal ellipses for $K$ are a foliation of $\C^2,$ it follows that $V_K^*\equiv v$ everywhere.

Now we need to compute $V_{K_2}^*$.

Let us mention a possible way to compute based on \cite[Example 3.9]{Ba92}. Note that $V_{K_2}^*(z)=V_{K_2+(R/r,0)}^*(z+(R/r,0))$ and set $\tilde K_2:=K_2+(R/r,0).$ Then we can easily see that
\begin{align}
&\tilde K_2:=\{\z\in \R^2: 2\z\cdot y_k-1\in [-1,1],\forall k\in\{1,2,3\}\},\notag\\
&y_1:=\frac{1}{2(R+r)}\left(\begin{array}{c}r\\1 \end{array}\right),\;y_2:=\frac{1}{2(R+r)}\left(\begin{array}{c}r\\ -1 \end{array}\right),y_3:=y_1+y_2\notag.
\end{align}
\begin{equation}\label{bymodifiedbaran}
V_{\tilde K_2}^*(\z)=\log h\left[\max_{1\leq k\leq 3}\left(\sum_{l=1}^2 A_{k,l}|y_l\cdot \z|+|y_k\cdot \z-1|  \right) \right],\text{ where }A:=\left[\begin{array}{cc}1 &  0\\ 0 & 1\\ 1 & 1  \end{array}\right].
\end{equation}
Therefore we have
\begin{equation}
V_{K_2}^*(\z)=\log h\left[\max_{1\leq k\leq 3}\left(\sum_{l=1}^2 A_{k,l}|y_l\cdot (\z+\z^{(0)})|+|y_k\cdot (\z+\z^{(0)})-1|  \right) \right],
\end{equation}
where $\z^{(0)}:=(R/r,0).$
On the other hand, it is probably more easy to consider the linear map $\R^2\rightarrow \R^2$ represented by the invertible matrix 
$$L:=\left[\begin{array}{cc}\frac{r}{2(R+r)}& \frac{1}{2(R+r)}\\ \frac{r}{2(R+r)}&\frac{-1}{2(R+r)}\end{array}  \right]$$
and to notice that $L (K_2+\z^{(0)})=L K_2=\Sigma:=\{x\in \R^2:x_i\geq 0,\; x_1+x_2\leq 1\},$ the standard simplex. Hence we have
\begin{equation}\label{bysimplex}
V_{K_2}^*(\z)=V_\Sigma^*(L(\z+\z^{(0)}))=\log h(|s_1|+|s_2|+|s_1+s_2-1|)\Big|_{s=L(\z+\z^{(0)})}.
\end{equation}
Here the last equality is a classical result, see for instance \cite{BuLeMa15}. Note that in particular this shows that $V_K^*$ is continuous and $V_K^*((t_1,t_2))=V_K^*((t_1,-t_2)),$ as assumed in the proof of Proposition \ref{piEK}.

By Proposition \ref{piEK}, and using $V_{K_1}^*(\z_1)=\log h\left(\frac{|\z_1+1|}{2}+\frac{|\z_1-1|}{2}\right)$, we get
\begin{align*}
&V_{\pi E,\Sigma_{2,1}}(t)=2V_K(\Phi(t))\\
=&2\max\left[\log h\left(\frac{|\frac{t_1-R+r}{r}|}{2}+\frac{|\frac{t_1-R-r}{r}|}{2}\right)\;,V_\Sigma^*\left(\left(\frac{t_1+\sqrt{t_2}}{2(R+r)},\frac{t_1-\sqrt{t_2}}{2(R+r)}\right)\right)\right],\;\forall t\in \C^2.
\end{align*}
By equation \eqref{VPVS} we get, $\forall w\in \mathcal C$,
\begin{equation*}
V_E^*(w,\mathcal C)=2\max\left[\log h\left(\frac{|\frac{w_1-R+r}{r}|}{2}+\frac{|\frac{w_1-R-r}{r}|}{2}\right)\;,V_\Sigma^*\left(\left(\frac{w_1+\sqrt{w_3}}{2(R+r)},\frac{w_1-\sqrt{w_3}}{2(R+r)}\right)\right)\right].
\end{equation*}
By equation \eqref{VTVP} we obtain, $\forall z\in \mathcal T$, 
\begin{align*}
&V_{\mathbb T}^*(z,\mathcal T)\\
=&\max\Bigg[\log h\left(\frac{|\frac{\Psi_1(z)-R+r}{r}|}{2}+\frac{|\frac{\Psi_1(z)-R-r}{r}|}{2}\right)\;,\\
&\;\;\;\;\;\;\;\;\;\;\;\;\;\;\;\;V_\Sigma^*\left(\left(\frac{\Psi_1(z)+\sqrt{\Psi_3(z)}}{2(R+r)},\frac{\Psi_1(z)-\sqrt{\Psi_3(z)}}{2(R+r)}\right)\right)\Bigg]\\
=&\log h\Bigg\{\max\Bigg[ \frac{|\sqrt{1-(z_3/r)^2}+1|+|\sqrt{1-(z_3/r)^2}-1|}{2},\\
&\;\;\;\;\;\;\;\;\;\;\;\;\;\;\;\;\left|\frac{\sqrt{z_1^2+z_2^2}+z_2}{2(r+R)}\right|+\left|\frac{\sqrt{z_1^2+z_2^2}-z_2}{2(r+R)}\right|+\left|\frac{\sqrt{z_1^2+z_2^2}}{(r+R)}-1\right|\Bigg] \Bigg\}.
\end{align*}
Here we used the monotonicity of $h$ on the positive real semi-axis, equation \eqref{bysimplex}, the definition of $\Psi$, and the equation of the torus in the form \eqref{torusdef2}.
\end{proof}
\begin{proof}[Proof of Lemma \ref{SioneLemma}]
Let us first consider non-degenerate ellipses. It is easy to see that, if $\mathcal E$ is an extremal ellipse for $K$, then $\Card(\mathcal E\cap \partial K)\geq 2.$ Note that, if $\mathcal E$ intersects $\partial K$ only on the two diagonal edges or in the oblique edges and on the side lying on $\z_1=-1$  it can not be extremal for $K.$ For, we can simply translate the ellipse along the $z_1$ axis by a small but positive displacement (e.g., $\mathcal E':=\mathcal E+ \epsilon e_1$) and then slightly dilate the ellipse to get a larger ellipse $\mathcal E'':=(1+\delta)\mathcal E'$ that is still in $K$; this essentially follows by the strict convexity of the ellipse. It is even more clear that if $\mathcal E$ intersects $\partial K$ only on two adjacent edges can not have maximal area. As a consequence, for a non-degenerate maximal ellipse for $K$ there are only two possible configurations
\begin{enumerate}
\item $\Card(\mathcal E\cap \partial K)=4$, one point on each edge of $K$
\item $\Card(\mathcal E\cap \partial K)=3$, two points on oblique edges and one on $\z_1=1$
\item $\Card(\mathcal E\cap \partial K)=3$, two points on vertical edges and one on an oblique edge
\item $\Card(\mathcal E\cap \partial K)=2$, two points on vertical edges and major axis on $\z_2=0$.  
\end{enumerate} 
It is fairly clear that $(3)$ and $(4)$ implies (ii). Indeed, in each of this cases, $\z\mapsto V_{K_1}^*(\z_1)$ is pluriharmonic on $\C^2\setminus (K_1\times \C).$ 
On the other hand, in the cases $(1)$ and $(2)$ the ellipse $\mathcal E$ is tangent on at least three sides of $\partial K_2$, so it needs to be maximal for it.

Let us consider the case of degenerate maximal ellipses, that are line segments. If $\mathcal E$ is a maximal line for $K$, then $\mathcal E\cap \partial K$ can not be the set of one point on a oblique edge and one point on the vertical edge $z_1=-1.$ For, consider any small enough $\epsilon>0$ and let $\mathcal E'=\mathcal E+\epsilon n$, where $n$ is the unit normal to $\mathcal E$ pointing the half plane containing the barycenter of $K.$ Then we have $length(\mathcal E\cap K)<length(\mathcal E'\cap K).$ Also, if $\mathcal E$ is a vertical line , then it must be the side of $\partial K$ with $z_1=1.$ Therefore, for a maximal line for $K$ there are only the following cases
\begin{enumerate}
\item $\mathcal E\cap \partial K=\{z\in K:z_1=1\}$
\item $\mathcal E\cap \partial K=\{z\in\partial K,z_2=\pm r z_1\pm R\}$
\item $\Card(\mathcal E\cap \partial K)=2$, two points on the oblique sides
\item $\Card(\mathcal E\cap \partial K)=2$, two points on the vertical sides
\end{enumerate}
In the cases (1) and (3) $\mathcal E$ is evidently extremal for $K_2$, so (i) holds true. In the cases (2) and (4), since the function $\z\mapsto V_{K_1}^*(\z)$ is pluriharmonic on $\C^2\setminus (K_1\times \C)$ and $(\mathcal E\cap K)\subset (K_1\times \C)$ 
, (ii) holds.
\end{proof}
\bibliographystyle{abbrv}
\bibliography{references}

\begin{thebibliography}{10}

\bibitem{Ba92b}
M.~Baran.
\newblock Bernstein type theorems for compact sets in {${\bf R}^n$}.
\newblock {\em J. Approx. Theory}, 69(2):156--166, 1992.

\bibitem{Ba92}
M.~Baran.
\newblock Plurisubharmonic extremal functions and complex foliations for the
  complement of convex sets in {${\bf R}^n$}.
\newblock {\em Michigan Math. J.}, 39(3):395--404, 1992.

\bibitem{Ba16}
T.~Bayraktar.
\newblock Equidistribution of zeros of random holomorphic sections.
\newblock {\em Indiana Univ. Math. J.}, 65(5):1759--1793, 2016.

\bibitem{Ba17}
T.~Bayraktar.
\newblock Zero distribution of random sparse polynomials.
\newblock {\em Michigan Math. J.}, 66(2):389--419, 2017.

\bibitem{Be82}
E.~Bedford.
\newblock The operator {$(dd^{c})^{n}$} on complex spaces.
\newblock In {\em Seminar {P}ierre {L}elong-{H}enri {S}koda ({A}nalysis),
  1980/1981, and {C}olloquium at {W}imereux, {M}ay 1981}, volume 919 of {\em
  Lecture Notes in Math.}, pages 294--323. Springer, Berlin-New York, 1982.

\bibitem{BeTa82}
E.~Bedford and B.~A. Taylor.
\newblock A new capacity for plurisubharmonic functions.
\newblock {\em Acta Math.}, 149(1-2):1--40, 1982.

\bibitem{BlBoLeMaPi18}
T.~Bloom, L.~Bos, N.~Levenberg, S.~Ma`u, and F.~Piazzon.
\newblock The extremal function for the complex ball for generalized notions of
  degree and multivariate polynomial approximation.
\newblock {\em Annales Polonici Mathematici}, 121:to appear, 2018.

\bibitem{BlLe13}
T.~Bloom and N.~Levenberg.
\newblock Pluripotential energy and large deviation.
\newblock {\em Indiana Univ. Math. J.}, 62(2):523--550, 2013.

\bibitem{BlLeWi15}
T.~Bloom, N.~Levenberg, and F.~Wielonsky.
\newblock Logarithmic potential theory and large deviation.
\newblock {\em Comput. Methods Funct. Theory}, 15(4):555--594, 2015.

\bibitem{BoLe17p}
L.~Bos and N.~Levenberg.
\newblock Bernstein-{W}alsh theory associated to convex bodies and applications
  to multivariate approximation theory.
\newblock {\em Computational Methods and Function Theory}, Oct 2017.

\bibitem{BoLeMaPi17}
L.~Bos, N.~Levenberg, S.~Ma`u, and F.~Piazzon.
\newblock A weighted extremal function and equilibrium measure.
\newblock {\em Math. Scand.}, 121(2):243--262, 2017.

\bibitem{BoLeWa08}
L.~Bos, N.~Levenberg, and S.~Waldron.
\newblock Pseudometrics, distances and multivariate polynomial inequalities.
\newblock {\em J. Approx. Theory}, 153(1):80--96, 2008.

\bibitem{BuLeMa05}
D.~Burns, N.~Levenberg, and S.~Ma'u.
\newblock Pluripotential theory for convex bodies in {$\bold R^N$}.
\newblock {\em Math. Z.}, 250(1):91--111, 2005.

\bibitem{BuLeMa09}
D.~Burns, N.~Levenberg, and S.~Ma'u.
\newblock Exterior {M}onge-{A}mp\`ere solutions.
\newblock {\em Adv. Math.}, 222(2):331--358, 2009.

\bibitem{BuLeMaRe10}
D.~Burns, N.~Levenberg, S.~Ma'u, and S.~R\'{e}v\'{e}sz.
\newblock Monge-{A}mp\`ere measures for convex bodies and {B}ernstein-{M}arkov
  type inequalities.
\newblock {\em Trans. Amer. Math. Soc.}, 362(12):6325--6340, 2010.

\bibitem{BuLeMa15}
D.~M. Burns, N.~Levenberg, and S.~Ma`u.
\newblock Extremal functions for real convex bodies.
\newblock {\em Ark. Mat.}, 53(2):203--236, 2015.

\bibitem{Ce98}
U.~Cegrell.
\newblock Pluricomplex energy.
\newblock {\em Acta Math.}, 180(2):187--217, 1998.

\bibitem{Ch89}
E.~M. Chirka.
\newblock {\em Complex analytic sets}, volume~46 of {\em Mathematics and its
  Applications (Soviet Series)}.
\newblock Kluwer Academic Publishers Group, Dordrecht, 1989.
\newblock Translated from the Russian by R. A. M. Hoksbergen.

\bibitem{De85}
J.-P. Demailly.
\newblock Mesures de {M}onge-{A}mp\`ere et caract\'erisation g\'eom\'etrique
  des vari\'et\'es alg\'ebriques affines.
\newblock {\em M\'em. Soc. Math. France (N.S.)}, (19):124, 1985.

\bibitem{De07}
J.-P. Demailly.
\newblock {\em Complex analytic and differential geometry}.
\newblock online at:
  http://www-fourier.ujf-grenoble.fr/~demailly/manuscripts/agbook.pdf, 2007
  (with updates).

\bibitem{Kl91}
M.~Klimek.
\newblock {\em Pluripotential theory}, volume~6 of {\em London Mathematical
  Society Monographs. New Series}.
\newblock The Clarendon Press, Oxford University Press, New York, 1991.
\newblock Oxford Science Publications.

\bibitem{LeSz91}
L.~Lempert and R.~Sz\H{o}ke.
\newblock Global solutions of the homogeneous complex {M}onge-{A}mp\`ere
  equation and complex structures on the tangent bundle of {R}iemannian
  manifolds.
\newblock {\em Math. Ann.}, 290(4):689--712, 1991.

\bibitem{Le06}
N.~Levenberg.
\newblock Approximation in {$\Bbb C^N$}.
\newblock {\em Surv. Approx. Theory}, 2:92--140, 2006.

\bibitem{Le12}
N.~Levenberg.
\newblock Ten lectures on weighted pluripotential theory.
\newblock {\em Dolomites Research Notes on Approximation}, 5 (special
  issue):1--59, 2012.

\bibitem{LePe18}
N.~Levenberg and M.~Perera.
\newblock A global domination principle for p-pluripotential theory.
\newblock 2018.

\bibitem{PaWo91}
G.~Patrizio and P.-M. Wong.
\newblock Monge-{A}mp\`ere functions with large center.
\newblock In {\em Several complex variables and complex geometry, {P}art 2
  ({S}anta {C}ruz, {CA}, 1989)}, volume~52 of {\em Proc. Sympos. Pure Math.},
  pages 435--447. Amer. Math. Soc., Providence, RI, 1991.

\bibitem{Pi18}
F.~Piazzon.
\newblock Pluripotential numerics.
\newblock {\em Constructive Approximation}, Jun 2018.

\bibitem{Ru68}
W.~Rudin.
\newblock A geometric criterion for algebraic varieties.
\newblock {\em J. Math. Mech.}, 17:671--683, 1967/1968.

\bibitem{Sa82}
A.~Sadullaev.
\newblock Estimates of polynomials on analytic sets.
\newblock {\em Izv. Akad. Nauk SSSR Ser. Mat.}, 46(3):524--534, 671, 1982.

\bibitem{Si81}
J.~Siciak.
\newblock Extremal plurisubharmonic functions in {${\bf C}^{n}$}.
\newblock {\em Ann. Polon. Math.}, 39:175--211, 1981.

\bibitem{Tr17}
L.~N. Trefethen.
\newblock Multivariate polynomial approximation in the hypercube.
\newblock {\em Proc. Amer. Math. Soc.}, 145(11):4837--4844, 2017.

\bibitem{Ze91}
A.~Z\'eriahi.
\newblock Fonction de {G}reen pluricomplexe \`a p\^ole \`a l'infini sur un
  espace de {S}tein parabolique et applications.
\newblock {\em Math. Scand.}, 69(1):89--126, 1991.

\end{thebibliography}

\end{document}